\def\@abssec#1{\vspace{.05in}\footnotesize \parindent .2in
{\bf #1. }\ignorespaces}
\numberwithin{equation}{section}
\newtheorem{theorem}{Theorem}[section]
\newtheorem{lemma}[theorem]{Lemma}
\newtheorem{proposition}[theorem]{Proposition}
\newtheorem{corollary}[theorem]{Corollary}
\newtheorem{definition}[theorem]{Definition}
\def \Rm {\mathbb R}
\def \Pm {\mathbb P}
\def \Nm {\mathbb N}
\def \Zm {\mathbb Z}
\def \Sm {\mathbb S}
\newcommand{\eps}{\varepsilon}
\newcommand{\dsum}{\displaystyle\sum}
\newcommand{\dint}{\displaystyle\int}
\newcommand{\aver}[1]{\langle {#1} \rangle}
\newcommand{\BC}{{\rm b}}
\newcommand{\CB}{{\rm d}}
\newcommand{\mA}{\mathcal A}
\newcommand{\mF}{\mathcal F}
\newcommand{\mG}{\mathcal G}
\newcommand{\mH}{\mathcal H}
\newcommand{\mD}{\mathfrak D}
\newcommand{\fa}{{\mathfrak a}}
\newcommand{\fb}{{\mathfrak b}}
\newcommand{\fc}{{\mathfrak c}}
\newcommand{\fO}{{\mathfrak O}}
\newcommand{\cout}[1]{}
\newcommand{\one}{{\mathbbm{1}}}
 \renewcommand{\arraystretch}{1.5}
\newcommand{\ml}[1]{{\color{black} #1}}
\title{\textbf{Homogenization of hydrodynamic transport in Dirac fluids}}
\author{Guillaume Bal\thanks{Departments of Statistics and Mathematics and CCAM, University of Chicago, Chicago, IL 60637; guillaumebal@uchicago.edu. Supported in part by the US NSF and ONR.},
Andrew Lucas\thanks{Department of Physics and Center for the Theory of Quantum Matter, University of Colorado, Boulder, CO 80309; andrew.j.lucas@colorado.edu},
Mitchell Luskin\thanks{School of Mathematics, University of Minnesota, Minneapolis, MN 55455; luskin@umn.edu. Supported in part by ARO MURI Award W911NF-14-0247, NSF DMREF Award 1922165, and NSF Award DMS-1819220.}}
\begin{document}

\maketitle


\begin{abstract}


Large-scale electrical and thermal currents in ordinary metals are well approximated by effective medium theory: global transport properties are governed by the solution to homogenized coupled diffusion equations.  In some metals, including the Dirac fluid of nearly charge neutral graphene, microscopic transport is not governed by diffusion, but by a more complicated set of linearized hydrodynamic equations, which form a system of degenerate elliptic equations coupled with the Stokes equation for fluid velocity.  In sufficiently inhomogeneous media, these hydrodynamic equations reduce to homogenized diffusion equations.

We re-cast the hydrodynamic transport equations as the infimum of a functional over conserved currents, and present a functional framework to model and compute the homogenized diffusion tensor relating electrical and thermal currents to charge and temperature gradients.  We generalize to this system two well-known results in homogenization theory: Tartar's proof of local convergence to the homogenized theory in periodic and highly oscillatory media, and sub-additivity of the above functional in random media with highly oscillatory, stationary and ergodic coefficients.


\end{abstract}


\renewcommand{\thefootnote}{\fnsymbol{footnote}}
\renewcommand{\thefootnote}{\arabic{footnote}}

\renewcommand{\arraystretch}{1.1}





\tableofcontents

\section{Introduction}
\subsection{Hydrodynamic transport}
 In ordinary metals, the flow of electrical and thermal currents is described by a pair of coupled diffusion equations: \begin{equation}
\nabla \cdot \left(\left(\begin{array}{cc} \sigma &\ \alpha \\ \tilde\alpha &\ \tilde\kappa \end{array}\right) \nabla \left(\begin{array}{c} \mu \\ T \end{array}\right)\right) =0, \label{eq:introdiff}
\end{equation}
where $ \sigma, \alpha, \tilde\alpha, \tilde\kappa : \mathbb{R}^D \rightarrow \mathbb{R}^{D\times D}$ are thermoelectric conductivity matrices which may depend on position.  The entire $2D\times 2D$ matrix above must be positive definite.    This simple set of equations describes nearly all macroscopic thermoelectric transport phenomena, and has been known for over a century.

At microscopic scales, there is no reason for the coefficients such as $\sigma$ to be independent of position.   Metals are dirty and inhomogeneous.  A toy model for this inhomogeneity is to take the conductivity matrices such as $\sigma$ to depend on position.  Yet that is not generally correct in real materials:  the microscopic equations of motion are often substantially more complicated than a simple diffusion equation.   A simplifying regime occurs when electrons become strongly correlated and begin to flow hydrodynamically \cite{andreev, gurzhi}; see \cite{lucasreview} for a review.  As a simple example, the flow of electrical and thermal currents in nearly charge neutral graphene is described by (linearized) Dirac fluid hydrodynamics \cite{fritz, hartnoll2007, lucas2016}:  (\ref{eq:introdiff}) becomes replaced by \begin{equation}\label{eq:systWF}
\begin{array}{rcl}
  \nabla\cdot \Big( -\sigma_Q(\nabla\mu-\gamma\nabla T) +n v\Big) &=& 0, \\[2mm]
  \nabla\cdot \Big( \sigma_Q\gamma(\nabla\mu-\gamma\nabla T) + s v \Big)  &=& 0 ,\\[2mm]
  n \nabla \mu + s \nabla  T - \eta \Delta  v - (\nabla \zeta\nabla\cdot) v&=& 0,
\end{array}
\end{equation}
where $\mu$ is the (perturbed) chemical potential, $T$ is the (perturbed) temperature, and $v$ is the velocity field.
The material parameters are the quantum conductivity $\sigma_Q>0$, viscosity coefficients $\eta>0$ and $\zeta>0,$ spatially varying chemical potential  $\gamma(x)$, charge density $n=n(\gamma),$ and entropy density $s=s(\gamma).$\footnote{In principle, $\sigma_Q,\eta,\zeta$ may also depend on $\gamma$, but this is not necessary in order to have a well-posed problem, and also does not change the strategy of solution.}    

The first two equations of (\ref{eq:systWF}) represent the conservation of charge and heat, respectively.  We emphasize that both of these equations do not take the usual form \cite{landau}; this is due to the lack of Galilean symmetry of the microscopic dynamics.  Instead, there is an approximate Lorentz symmetry which fixes the form of these equations \cite{hartnoll2007}.  Thermodynamic identities $n=\partial P/\partial \mu$ and $s=\partial P/\partial T$ (here $P=P(\mu,T)$ is the pressure) imply that the third equation of (\ref{eq:systWF}) is simply the time-independent linearized Navier-Stokes equation since 
\begin{equation}
n\nabla\mu + s\nabla T = \nabla P. \label{eq:gradpressure}
\end{equation}
We now can provide more motivation for the specific form (\ref{eq:systWF}): the conductor has a locally inhomogeneous chemical potential (caused by charged impurities) -- this chemical potential modulation $\gamma$ is responsible for fluctuations in the thermodynamic coefficients $n$ and $s$, which are related by (\ref{eq:gradpressure}).

All three of these equations are to be understood as the leading order terms in a derivative expansion (called effective theory in physics) \cite{lucasreview}.  In general, hydrodynamics describes only the dynamics of the degrees of freedom whose wavelength $\lambda$ is very large compared to a microscopic mean free path $\ell$:  $\ell \ll \lambda$, and it should be thought of as an asymptotic expansion in the parameter $\ell/\lambda \rightarrow \ell \nabla$.   The terms which are included in (\ref{eq:systWF}) are the leading order terms (fewest number of derivatives) which lead to a well-posed problem.

The Dirac fluid model above predicts a number of unconventional transport phenomena, such as the breakdown of the Wiedemann-Franz law: a relationship between two of the matrices in (\ref{eq:introdiff}):\footnote{We note that the experimentally measured ``thermal conductivity" is generally not the coefficient $\tilde\kappa$: see (\ref{eq:truekappa}).  The Wiedemann-Franz law is more conventionally stated in terms of the experimentally measured coefficient, but (\ref{eq:wflaw}) also holds as stated in ordinary metals, at low temperature. } \begin{equation}
    \tilde\kappa = \frac{\pi^2 T_0}{3} \sigma.  \label{eq:wflaw}
\end{equation}
We have set some fundamental constants of nature to unity; here $T_0$ represents the equilibrium temperature of the sample, which has been scaled to $1$ in \eqref{eq:systWF} \cite{lucas2016}).  This relationship is expected whenever the dominant microscopic scattering mechanism is elastic scattering of single electrons (often off of impurities, at low temperatures).   There is no reason for this relation to hold in (\ref{eq:systWF}) -- and indeed, it does not -- (\ref{eq:systWF}) describes the collective hydrodynamic flow of an electron fluid through an inhomogeneous landscape.  Since experimentally, the violations of (\ref{eq:wflaw}) are a clear signature of unconventional transport, it is worthwhile to understand exactly how much $\kappa$ and $\sigma$ can differ.

\subsection{Homogenization}
A typical application of unconventional transport models such as (\ref{eq:systWF}), in the physics literature, proceeds as follows.  One first solves the system (\ref{eq:systWF}) in a periodic domain, and calculates \emph{global} transport properties.  After finding the global transport coefficients, one then \emph{asserts} that those coefficients (appropriately rescaled) are equivalent to \emph{local} transport coefficients in (\ref{eq:introdiff}), such as $\sigma$.  Finally, one can solve the transport problem in any domain $X$ with any boundary conditions, not by solving (\ref{eq:systWF}) but by solving the simpler system (\ref{eq:introdiff}). 

The purpose of this paper is to prove that these assumptions are justified by showing that the solution of (\ref{eq:systWF}), in a sufficiently large domain with $\gamma$ sufficiently oscillatory in all directions, \emph{locally converges} to a solution of (\ref{eq:introdiff}) with \emph{constant coefficients}, equal to those obtained through a global solution.  In simpler settings, including \eqref{eq:introdiff} along with more general systems, convergence proofs are achieved through a well understood mathematical framework called homogenization \cite{BLP-78,JKO-SV-94}.  We will extend these results to systems such as \eqref{eq:systWF}, and provide a mathematical perspective on earlier theories of hydrodynamic transport that have arisen in the physics literature \cite{andreev, lucas2015, lucas2016, lucashartnoll2017}.

The system (\ref{eq:systWF}) includes a number of obstructions to the standard homogenization theory.  For conventional systems including (\ref{eq:introdiff}), the equations themselves are the infimum of a Lagrangian functional and can be studied with standard minimization procedures.  In contrast, the Lagrangian which leads to (\ref{eq:systWF}) is not bounded from below:  the equations of motion are a \emph{saddle point}.  A more convenient variational formulation exists for the conserved quantities in the system, namely the (charge and thermal) currents.  In the physics literature, it was shown that this variational principle is interpreted as the statement that transport proceeds along the least dissipative trajectory, consistent with the second law of thermodynamics \cite{lucas2015, lucashartnoll2017}.  Section \ref{sec:VF} presents the appropriate theory for the variational formulation, and constructs homogenized transport coefficients (effective tensors) in a variety of contexts and boundary conditions. 

A second obstruction to standard homogenization of the model \eqref{eq:systWF} is that its variational counterpart is well-posed only when the coefficients are {\em sufficiently oscillatory}. In other words, there would be no homogenization theory for homogeneous coefficients.  There is an important physical reason why this must be the case. In the absence of any inhomogeneity and with suitable boundary conditions, there is an infinite number of solutions to (\ref{eq:systWF}), characterized by arbitrary constant shifts to the velocity $v$.  Physically, these shifts are allowed because momentum is conserved in the absence of inhomogeneity in $n$ and $s$: momentum conservation implies ballistic transport \cite{andreev,lucas2015}, and so (\ref{eq:introdiff}) would not make sense.  However, in the presence of inhomogeneity, there is no additional conservation law for momentum; only charge and energy are conserved.  Transport is again diffusive and the homogenized limit of (\ref{eq:systWF}) is of the form (\ref{eq:introdiff}).

A final remark is that the homogenization of (\ref{eq:systWF}) is only well-posed when $\eta>0$: higher derivative terms play a critical role in stabilizing the equations.  This subtlety makes generalizing Voigt bounds (which are elementary for standard homogenization) to the system (\ref{eq:systWF}) a non-trivial task.

With the current-based variational formalism in place, we extend a very selected subset of the standard results of homogenization to our degenerate setting. Section \ref{sec:per} generalizes Tartar's energy method to show standard weak/strong convergence results in the context of Dirac fluids. In section \ref{sec:rand}, we revisit the sub-additive properties that underline the homogenization results obtained by $\Gamma-$convergence in a wide variety of contexts \cite{dal2012introduction,dal1986nonlinear,muller1987homogenization}. We confine ourselves to using sub-additivity to show that homogenized coefficients converge in an appropriate sense as the size of the domain increases to effective deterministic coefficients when the random oscillations are stationary and ergodic. 

The path to proving more general standard homogenization results \cite{BLP-78,BMW-JRAM-94,dal2012introduction,JKO-SV-94} in the context of \eqref{eq:systWF} is reasonably clear but not considered further here. 
In the future, we hope that some of the mathematical tools developed here will generalize to the homogenization of a quantum kinetic theory describing interacting electrons moving through inhomogeneous media \cite{lucashartnoll2018}.

\section{Variational formulation and effective coefficients}
\label{sec:VF}

We now describe a general framework that encompasses \eqref{eq:systWF}.

Let $D$ be the spatial dimension, and $m$ be the number of conserved currents.
We study the following problem posed on a open bounded domain $X\subset\Rm^D$. 
 Let \[
  \psi \in L^2(X) \otimes \Rm^{1\times m},\ v \in H^1(X)\otimes\Rm^{D\times1},\quad a\in C^1(X)\otimes\Rm^{(m-1)\times m},\ b\in C^1(X)\otimes \Rm^{1\times m}.
\]
The abstraction (of a generalization) of (\ref{eq:systWF}) takes the form
\begin{equation}\label{eq:syst}
 \begin{array}{rcl}
  \nabla \cdot ( -(\nabla\psi) a^T a + vb ) &=& 0,
\\
  (\nabla \psi) b^T + Lv &=& 0,
\end{array}
\end{equation}
where 
the vector operator
\[
   L = - \nabla\cdot\eta \nabla - \nabla \zeta \nabla\cdot = S^T S
\]
has bounded coefficients $\eta\geq\eta_0>0$ (as a symmetric positive definite matrix) and $\zeta\geq0$  that may naturally be written as a sum of squares in the form $S^TS$ with \begin{equation}
    S=(\eta^{\frac12}\nabla,\zeta^{\frac12}\nabla\cdot)^T.\end{equation}
Beyond the positive definite constraint on $\eta$, our main assumption is that $a$ is of rank $m-1$ and that the span of $a$ and $b$ is of full rank $m$.  It is this rank deficiency that makes the problem interesting mathematically.   Note that $a$ is full rank for generic electron fluids \cite{lucasreview}: in the system (\ref{eq:systWF}) $a$ has reduced rank due to approximate Lorentz covariance.

To compare (\ref{eq:syst}) with (\ref{eq:systWF}), we set $m=2,$  $\psi=(\mu,T),$
and 
\begin{equation}
a = \sigma_Q^{\frac12} (-1,\gamma),\qquad  b=(n,s). \label{eq:abdiracfluid}
\end{equation}
The assumption that the span of $a$ and $b$ is of full rank is then that $\gamma n+s\ne 0.$
\subsection{Saddle point system}
Upon defining
\begin{equation}\label{eq:opAB}
  A\psi = (\nabla \psi) a^T,\quad B \psi = (\nabla\psi) b^T,
\end{equation}
the above equation is equivalent to
\begin{equation}\label{eq:galsaddle}
  \left(\begin{matrix}  S^TS & B  \\ B^T & -A^TA \end{matrix} \right) \left(\begin{matrix}  v\\ \psi\end{matrix} \right) = 0,
\end{equation}
which need to be augmented with appropriate boundary conditions and source terms. We consider two types of boundary conditions: either periodic boundary conditions for $(\psi,v)$ when $X$ is a torus, or Dirichlet boundary conditions imposing that $\psi$ and $v$ vanish on $\partial X$. The source terms of interest here involve linear profiles for $\psi$, i.e., $\psi=p\cdot x+\varphi$ with $\varphi$ either periodic or with vanishing Dirichlet boundary conditions.

With such boundary conditions and in appropriate topologies, which ensure that integration by parts are justified and boundary contributions vanish, we can write the above problem as a saddle point for the following functional
\begin{equation}\label{eq:saddle}
  \mF(v,\psi) = \frac12\|Sv\|^2 - \frac12 \|A\psi\|^2 + \aver{B\psi,v}.
\end{equation}
The above system may then be seen as the saddle point (Euler-Lagrange) equations associated to the above formulation, which may formally be obtained as $\min_v\max_\psi\mF$ or $\max_\psi\min_v \mF$. We do not try to prove a minimax result (the equality of the last two objects) as the above functional do not seem to satisfy standard results on saddle point theorems. In particular, while $Sv$ controls $v$ up to a constant vector field,
$A\psi$ is degenerate as $a$ is assumed to be only of rank $m-1$.

Instead of working with a saddle point system, and since our objective is to compute currents anyway, we consider a different functional setting and introduce the currents
\begin{equation}\label{eq:J}
  J =- (\nabla\psi) a^Ta + vb \in L^2(X)\otimes \Rm^{D\times m}.
\end{equation}
Note that the second line in the system of equations in \eqref{eq:galsaddle} is now simply of the form $\nabla\cdot J=0$. This respects the standard notion of currents as divergence-free vector fields. These vector fields are not independent since $Lv+B\psi=0$ couples them in a fashion we now analyze.

Since we assume that the span of $a$ and $b$ is of full rank $m$, 
we can introduce the dual or reciprocal matrices $w\in  C^1(X)\otimes \Rm^{(m-1)\times m}$ and
$u\in C^1(X)\otimes \Rm^{1\times m}$ (the rows of $a$ and $b$ are a basis and the rows
of $w$ and $u$ are a dual or reciprocal basis) such that
\begin{equation}\label{eq:dual}
w a^T =a w^T =I_{m-1},\qquad w b^T=b w^T=0,\qquad u a^T =a u^T=0,\qquad u b^T=b u^T=1.
\end{equation}
We then obtain by multiplying \eqref{eq:J} on the right by $w^T$ and then $u^T$ that
\begin{equation}\label{eq:mult}
-\nabla\psi a^T=Jw^T\quad\text{and}\quad v=J u^T.
\end{equation}
We next obtain from the momentum equation in \eqref{eq:syst} and from \eqref{eq:mult} above that
\begin{equation}\label{eq:mom}
-(\nabla \psi) b^T = Lv=LJ u^T.
\end{equation}
Since $w$ and $u$ are dual matrices to $a$ and $b,$ we have the identity
\begin{equation*}\label{eq:id}
I_m=a^T w+ b^T u,
\end{equation*}
so we get by multiplying $-\nabla\psi$ on the right by this identity and using \eqref{eq:mult}
and \eqref{eq:mom} that
\begin{equation}\label{eq:psi}
- \nabla \psi = J w^T w +L (J u^T) u = J w^T w  + S^T S (J u^T) u  =:\mA J.
\end{equation}
This shows that the curl of both sides vanishes, which provides a system of partial differential constraints on the divergence-free currents $J$. The above operator $\mA$ is formally linear.

\subsection{Variational formulation for currents}
Our objective is now to find a variational formulation for divergence-free currents $J$.  We need to integrate the last term by parts to construct a bilinear form for the currents $J$. 

We denote by $\aver{\cdot,\cdot}$ the usual (real) inner product on $L^2(X)\otimes\Rm^{m_1\times m_2}$, the space of square integrable functions with values in tensors in $\Rm^{m_1\times m_2}$ and use the same notation independently of $m_1,m_2$. We use the standard inner product $(\cdot,\cdot)$ for tensors in $\Rm^{m_1\times m_2}$, given by $(A,B)={\rm Tr}(A^TB)$.

We find, with $\tilde J$ a sufficiently smooth test function,
\[
  \aver{-\nabla\cdot\eta\nabla Ju^T,\tilde Ju^T} = \aver{\eta\nabla Ju^T,\nabla \tilde J u^T} - \dint_{\partial X} (\nu^T\eta \nabla Ju^T,\tilde J u^T) d\sigma 
\]
with $d\sigma$ the Lebesgue measure on $\partial X$ and $\nu$ the outward unit normal, and
\[
\aver{-\nabla\zeta\nabla\cdot Ju^T,\tilde J u^T} = \aver{\zeta\nabla\cdot Ju^T,\nabla\cdot \tilde J u^T} - \dint_{\partial X} (\nu^T \zeta \nabla\cdot Ju^T,\tilde J u^T) d\sigma.
\]
We observe that the boundary contributions vanish in three standard situations: (i) when $\tilde Ju^T=0$ on $\partial X$ (solution and test functions satisfying Dirichlet conditions); (ii) when $\nu^T(\eta\nabla Ju^T + \zeta\nabla\cdot Ju^T)=0$ (natural or Neumann boundary conditions); or (iii) when all above functions are periodic on a periodic domain $X$ (and arbitrary shifted copies). Such boundary conditions translate into corresponding (complicated) boundary conditions for $(\psi,v)$ via \eqref{eq:psi} and \eqref{eq:mult}.

Multiplying the equation for $\nabla \psi$ on the right by $\tilde J$ and integrating over $X$ with the above constraints on the  boundary, we find
\begin{equation}\label{eq:psiJ}
  -\aver{\nabla \psi, \tilde J} = \aver{Jw^T,\tilde Jw^T}  + \aver{S Ju^T,S \tilde Ju^T} =: \fa(J,\tilde J).
\end{equation}
The bilinear form $\fa(J,\tilde J)$ is formally non-negative and we will soon find conditions so that it is an inner product. The saddle point is therefore replaced by a minimization problem. The derivation is reminiscent of the duality between Lagrangian and Hamiltonian dynamics. A major difference is the presence of the operator $L$, which introduces higher-order derivatives than in standard Lagrangians and whose inverse is non-local.

The currents satisfy the constraint $\nabla\cdot J=0$. We therefore introduce $f$ as the anti-symmetric tensor (representing a differential form in $\Lambda^2(X)$) so that
\[
  J = \nabla\cdot f + c
\]
where $c$ is a matrix of constant coefficients and $f$ is chosen with Dirichlet or periodic boundary conditions. In coordinates, this is
\begin{equation}\label{eq:Jf}
  J_{lk}(x) = \partial_j f^{j}_{lk}(x) + c_{lk},\qquad f^j_{lk}+f^l_{jk}=0,\qquad 1\leq l,j\leq D,\quad 1\leq k\leq m.
\end{equation}
In dimension $D=2$ and for each index $1\leq k\leq m$, the antisymmetric tensor may be identified with a scalar function $f=f_k$ and $J_k=\nabla^\perp f_k+c_k$ with $\nabla^\perp f_k=(-\partial_2 f_k,\partial_1 f_k)^T$.

Homogenization is a macroscopic regime that aims to answer the following type of questions. When a profile $p=\aver{\nabla\psi}$ is prescribed macroscopically, what is the average current $c=\aver{J}$ generated? When a prescribed average current $c=\aver{J}$ flows, what is the average profile $p=\aver{\nabla\psi}$? 
In the rest of the section, we consider the functional setting allowing us to solve these two dual problems. 

In the first problem with prescribed macroscopic current $c=\aver{J}$, we wish to solve a problem of the form: Find $J=\nabla\cdot f$ such that 
\[
 \fa(c+J,\tilde J)=0
\]
holds for each admissible test function $\tilde J$ and for a solution $J$ with appropriate boundary conditions for $f.$ 
We first consider periodic and Dirichlet-type boundary conditions and show that the above problem admits a unique solution.


One of the main novelties of the proposed theory is the derivation of mesoscopic transport properties (conductivities) that are defined (finite) only provided that a sufficient amount of spatial inhomogeneity arises, in order to prevent the global ballistic transport of momentum ($v$).  
When all coefficients $b_j$ are functions that do not genuinely depend on the $D$ spatial dimensions, such ballistic transport will arise. We therefore need an assumption of sufficient oscillation in the coefficients $b_j$.

\begin{definition} [Oscillations]
 We define the oscillation of the coefficients $b$ as
\begin{equation}\label{eq:osc}
   \fO = \min_{\theta\in\Sm^{D-1}}  \max_{1\leq j\leq m}  \aver{(\theta\cdot\nabla b_j)^2}.
\end{equation}
\end{definition}
We also define spaces of smooth test functions as follows.
\begin{definition}[Test functions.] \label{def:testfunctions} For $X$ a periodic domain in $\Rm^D$, we define $\mD_\sharp$ as the space of currents $J=\nabla\cdot f$ where $f$ is a smooth antisymmetric tensor in the sense of \eqref{eq:Jf} with periodic boundary conditions.  

For $X$ a bounded open domain in $\Rm^D$, we define $\mD_D$ as the space of currents $J=\nabla\cdot f$ with $f$ a smooth antisymmetric tensor such that both $f=0$ and $J u^T=(\nabla\cdot f) u^T=0$ on $\partial X$.  

Finally, we denote by $\mD$ the space of smooth currents $J=\nabla\cdot f$ on $X$.
\end{definition}



Note that by the Hodge decomposition \cite[Chapter 5]{Taylor-PDE-1}, any periodic vector field may be decomposed as $\nabla\phi+\nabla\cdot f +c$ with $c$ harmonic and hence a constant vector. Since $J$ is divergence-free, $\phi=0$ and $\mD_\sharp$ concerns the periodic  solenoidal component. 

For both periodic and Dirichlet boundary conditions, we find that $\aver{c+J}=\aver{c}+\aver{\nabla\cdot f}=\aver{c}$ so that the average flux for $c+J$ is indeed $c$.


In the case of Dirichlet boundary conditions, integration by parts in the term $S^TSJu^T$ imposes that we use test functions $Ju^T=v$ vanishing on the boundary.

In both cases of currents in $\mD_\sharp$ or $\mD_D$, the passage from \eqref{eq:psi} to \eqref{eq:psiJ} is justified. We now prove that $\fa$ is well behaved when the coefficient $b$ is sufficiently oscillating.
\begin{lemma}\label{lem:a}
  Let us assume that $b$ is sufficiently oscillatory in the sense that for a constant vector $c$, then $c\cdot\nabla b_j=0$ for all $1\leq j\leq m$ a.e. implies that $c=0$. Then $\fa(J,\tilde J)$ is an inner product on $\mD_\sharp$ and $\mD_D$.
\end{lemma}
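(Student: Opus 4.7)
The bilinearity and symmetry of $\fa$ follow immediately from the corresponding properties of the $L^2$ inner product $\aver{\cdot,\cdot}$, so the content of the lemma is positive definiteness. I would begin by rewriting
\[
\fa(J,J) = \|Jw^T\|^2 + \|\eta^{\frac12}\nabla(Ju^T)\|^2 + \|\zeta^{\frac12}\nabla\cdot(Ju^T)\|^2.
\]
The hypothesis $\fa(J,J)=0$ then forces $Jw^T=0$ a.e.\ together with $\nabla(Ju^T)=0$ a.e., where the latter uses that $\eta\geq\eta_0>0$. Since $X$ is connected in both settings considered, the velocity $v:=Ju^T$ is therefore a constant vector $v_0\in\Rm^D$.

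Next I would reconstruct $J$ from its images under $w^T$ and $u^T$. The duality relations \eqref{eq:dual} express precisely that the $m\times m$ matrix built from the rows of $a$ and $b$ is invertible with two-sided inverse built from the columns $w^T$ and $u^T$, so in addition to the identity \eqref{eq:id} recorded in the text, its transpose
\[
I_m = w^T a + u^T b
\]
is also valid. Multiplying on the left by $J$ and inserting the two constraints just obtained gives
\[
J \;=\; (Jw^T)\,a + (Ju^T)\,b \;=\; v_0\,b.
\]

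Finally, the currents in $\mD_\sharp$ and $\mD_D$ are by construction divergence-free, so $\nabla\cdot(v_0 b)=0$. Since $v_0$ is constant, the $k$-th column of this identity reads $v_0\cdot\nabla b_k = 0$ a.e.\ for each $k=1,\ldots,m$, and the oscillation hypothesis forces $v_0=0$, hence $J=v_0 b=0$. In the Dirichlet case $\mD_D$ the conclusion $v_0=0$ is in fact already available from the boundary condition $Ju^T=0$ on $\partial X$, so the oscillation hypothesis is used in its essential (periodic) form.

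The main obstacle is conceptual rather than technical: one must recognize that the only possible zero of $\fa$ among divergence-free currents is the constant ballistic mode $v_0\,b$, and that the oscillation hypothesis is precisely what eliminates this mode. This mirrors the physical discussion preceding Definition \ref{def:testfunctions}, where the same oscillation condition is invoked to suppress ballistic transport of momentum and thereby permit a well-defined homogenized description.
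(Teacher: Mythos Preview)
Your proof is correct and follows essentially the same route as the paper's: from $\fa(J,J)=0$ conclude $Jw^T=0$ and $Ju^T$ constant, use the reciprocal-basis identity to write $J=v_0b$, and invoke the divergence-free condition plus the oscillation hypothesis to force $v_0=0$. Your version is slightly more explicit (spelling out the role of $\eta\geq\eta_0>0$, the connectedness of $X$, and the remark that in the Dirichlet case the boundary condition $Ju^T=0$ already kills $v_0$), but the argument is the same.
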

\begin{proof}
 The form is clearly symmetric, bilinear, and continuous on the above spaces equiped with their natural Fr\'echet topology. It remains to show that $\fa(J,J)=0$ implies that $J=0$. Here, $J$ is any smooth divergence-free vector field. We deduce that $Jw^T=0$ and $\nabla Ju^T=0$. This show that $Ju^T=c$ is a constant vector in $\Rm^D$. Recall that $J=Jw^T a + Ju^Tb$ so that $J=cb$. Therefore,
\[
  0=\nabla\cdot J = \nabla\cdot (cb) = c^T\nabla b. 
\]
This implies that $c=0$ by assumption and shows that $\fa(J,\tilde J)$ is an inner product both for Dirichlet and periodic boundary conditions.
\end{proof}
Note that $\fO>0$ is a sufficient condition for the assumption in the preceding lemma.
\begin{definition}[Hilbert spaces.] We denote by $\mH_\sharp$ and $\mH_D$ the completions of the pre-Hilbert spaces $\mD_\sharp$ and $\mD_D$, respectively, for the norm generated by $\fa(J,\tilde J)$. We denote by $\mH$ the completion of $\mD$ for the same inner product.

While constant tensors in $\Rm^{D\times m}$ belong to $\mH$, they do not belong to $\mH_\BC$ for $\BC=D,\sharp$. We introduce $\check\mH_\BC=\Rm^{D\times m}\oplus \mH_\BC$.
\end{definition}
With this notation, we have the following Riesz representation theorem:
\begin{corollary}[Riesz] Let $S$ be a continuous linear form from $\mH_\BC$ to $\Rm$. Then the following problem
\begin{equation}\label{eq:varform}
  \mbox{ Find }\ J \in \mH_\BC \ \mbox{ such that } \  \fa(J,\tilde J)=S(\tilde J) \ \mbox{ for all } \tilde J\in \mH_\BC
\end{equation}
admits a unique solution.
\end{corollary}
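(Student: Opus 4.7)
The plan is to observe that this corollary is essentially an immediate application of the classical Riesz representation theorem for Hilbert spaces, once the prerequisite structural facts established earlier are assembled. The entire substance of the statement lies in Lemma \ref{lem:a}: the bilinear form $\fa$ is symmetric, continuous, and positive definite (i.e., actually an inner product) on each of the pre-Hilbert spaces $\mD_\sharp$ and $\mD_D$, under the oscillation hypothesis on $b$. Thus the work of showing that $\fa$ actually generates a norm has already been done.

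First I would record that $\mH_\BC$ is a Hilbert space by construction: it is defined as the completion of $\mD_\BC$ with respect to the norm $\|J\|_\fa := \fa(J,J)^{1/2}$. A standard completion argument (uniform continuity of $\fa(\cdot,\cdot)$ on bounded sets of $\mD_\BC\times\mD_\BC$, by symmetry and the Cauchy–Schwarz inequality) shows that $\fa$ extends uniquely by continuity to a symmetric bilinear form on $\mH_\BC\times\mH_\BC$, and this extension remains an inner product inducing the completion norm. Hence $(\mH_\BC,\fa)$ is a genuine Hilbert space.

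Then, given any continuous linear form $S:\mH_\BC\to\Rm$, the classical Riesz representation theorem furnishes a unique $J\in\mH_\BC$ such that
\[
S(\tilde J) = \fa(J,\tilde J)\qquad\text{for all } \tilde J\in\mH_\BC,
\]
which is exactly the claim. Uniqueness follows from the fact that if $J_1,J_2$ are two solutions, then $\fa(J_1-J_2,\tilde J)=0$ for every $\tilde J\in\mH_\BC$; taking $\tilde J = J_1-J_2$ gives $\|J_1-J_2\|_\fa^2 = 0$, whence $J_1=J_2$.

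There is essentially no obstacle; the only care needed is to confirm that the assumption of Lemma \ref{lem:a} applies in both settings $\BC=\sharp$ and $\BC=D$ (which it does, since that lemma treats the two cases simultaneously), and to note that no further structural hypothesis on $S$ beyond continuity is required. The corollary is therefore a one-line consequence of Lemma \ref{lem:a} together with the Riesz representation theorem, and the proof reduces to explicitly citing these two inputs.
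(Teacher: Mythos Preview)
Your proposal is correct and matches the paper's approach exactly: the paper states this corollary without proof, treating it as an immediate consequence of the classical Riesz representation theorem once Lemma~\ref{lem:a} has established that $\fa$ is an inner product and $\mH_\BC$ is defined as the corresponding completion. Your write-up simply makes explicit the one-line argument the paper leaves implicit.
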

The following result shows that $\fa$ is coercive on $\check\mH_\BC$ with a stability controlled by $\fO$.
\begin{proposition}\label{prop:stab}
  There exists a constant $C$ that depends on $X$ and bounds on the coefficients $(u,v)$ such that
\begin{equation}
  \aver{|J|^2} \leq \frac{C}{\fO} \fa(J,J) \quad \mbox{ for all } J\in \check\mH_\BC. \label{eq:l2bd}
\end{equation}
\end{proposition}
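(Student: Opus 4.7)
The plan is to reduce control of $\aver{|J|^2}$ to control of two pieces via the decomposition $J = (Jw^T)a + vb$ with $v = Ju^T$, which follows from the dual-basis identity $I_m = a^T w + b^T u$ multiplied on the right of $J$ together with \eqref{eq:mult}. Since $a$ and $b$ are bounded, $|J|^2 \le C(|Jw^T|^2 + |v|^2)$ pointwise, and $\aver{|Jw^T|^2}$ is directly one of the two summands defining $\fa(J,J)$. The whole estimate therefore reduces to showing $\aver{|v|^2} \le (C/\fO)\,\fa(J,J)$.

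Since $\fa(J,J) \ge \eta_0\aver{|\nabla v|^2}$, the gradient of $v$ is controlled in $L^2$. In the Dirichlet case, $J_0 u^T = 0$ on $\partial X$ for $J_0 \in \mH_D$, so Poincar\'e immediately bounds $v$ in $L^2$ modulo the contribution from the constant part $c$ of $\check\mH_D = \Rm^{D\times m} \oplus \mH_D$. In the periodic case, I would write $v = \bar v + \tilde v$ with $\bar v = \aver{v}$ constant, and Poincar\'e yields $\aver{|\tilde v|^2} \le C\aver{|\nabla v|^2} \le C\fa(J,J)/\eta_0$. In both settings, everything reduces to bounding the constant piece of $v$, which is precisely where the oscillation $\fO$ must enter.

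To control that constant piece, I would exploit the divergence-free constraint in weak form, $\aver{J, \nabla \phi} = 0$ for every admissible smooth test field $\phi$. Substituting $J = (Jw^T)a + v b$, splitting $v = \bar v + \tilde v$, and integrating by parts the $\bar v$-contribution yields
\[
\left|\sum_k \aver{\phi_k\,(\bar v \cdot \nabla b_k)}\right| \le C\,\fa(J,J)^{1/2}\,\|\nabla\phi\|_{L^2},
\]
where on the right the $L^2$ bound on $Jw^T$ and the Poincar\'e bound on $\tilde v$ are used. I would then choose $\phi$ to saturate this bound against $\fO$: fix the index $k$ for which $\aver{(\bar v \cdot \nabla b_k)^2} \ge \fO|\bar v|^2$ (guaranteed by Definition \ref{def:testfunctions}'s preceding oscillation definition), and take $\phi_k$ to be a potential for $\bar v \cdot \nabla b_k$, e.g., the solution of $-\Delta \phi_k = \bar v \cdot \nabla b_k$ with appropriate boundary conditions. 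Combined with the oscillation inequality, this produces $|\bar v|^2 \le C\,\fa(J,J)/\fO$, and adding the bounds on $Jw^T$ and $\tilde v$ gives the proposition.

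The hard part will be the last step: the duality argument above naturally produces an $\dot H^{-1}$-type estimate on $\bar v \cdot \nabla b_k$, whereas $\fO$ is an $L^2$ quantity. Bridging these norms requires using the $C^1$ regularity of $b$ (together with interpolation) or, alternatively, a compactness argument in the style of Lemma \ref{lem:a}: if the desired inequality failed, a normalized subsequence of counterexamples would converge to a limit $J^\star$ with $\fa(J^\star,J^\star)=0$ and $\aver{|J^\star|^2}=1$, contradicting Lemma \ref{lem:a} under the oscillation hypothesis. Tracking the quantitative dependence on $\fO$ carefully through either route is what recovers the stated $1/\fO$ scaling.
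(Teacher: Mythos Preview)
Your overall strategy matches the paper's: decompose $J=(Jw^T)a+vb$ with $v=Ju^T$, use $\fa$ to control $Jw^T$ and $\nabla v$ directly, apply Poincar\'e to bound $v-\bar v$ (with $\bar v=\aver{v}$), and then extract control of the constant piece $\bar v$ from the divergence-free condition tested against a suitable $\nabla\varphi$. Up to this point the two arguments are essentially identical.

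The difference, and the weak point of your proposal, is the choice of test function in the last step. You take $\phi_k$ solving $-\Delta\phi_k=\bar v\cdot\nabla b_k$, which, as you correctly diagnose, only yields an $\dot H^{-1}$ bound on $\bar v\cdot\nabla b_k$ rather than the $L^2$ quantity $\aver{(\bar v\cdot\nabla b_k)^2}$ appearing in $\fO$. Your two suggested fixes do not clearly close this: the compactness/contradiction route would give \emph{some} constant but not the explicit $1/\fO$ dependence, and it is not obvious what interpolation argument would convert the $\dot H^{-1}$ control into the desired $L^2$ control with the right constant.

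The paper avoids this detour entirely by a more direct choice. It tests a \emph{single} column $J_j$ (which is itself divergence-free) against $\nabla\varphi$ with the scalar test function $\varphi=\chi\,(\bar v\cdot\nabla b_j)$, where $\chi$ is a cutoff approximating $\one_X$. Then $\aver{J_j,\nabla\varphi}=0$ gives
\[
\bigl|\aver{\chi\,(\bar v\cdot\nabla b_j)^2}\bigr|=\bigl|\aver{\varphi,\bar v\cdot\nabla b_j}\bigr|=\bigl|\aver{b_j\bar v-J_j,\nabla\varphi}\bigr|\le \|J-\bar v\,b\|\,\|\nabla\varphi\|,
\]
and the left side is already the desired $L^2$ quantity. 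Letting $\chi\to\one_X$ and maximizing over $j$ yields $|\bar v|^2\,\fO$ on the left, controlled by $\fa(J,J)$ on the right (the $\|\nabla\varphi\|$ factor contributes one power of $|\bar v|$ and otherwise only constants depending on bounds for $b$). So the ``hard part'' you flag disappears once you test with $\varphi=\bar v\cdot\nabla b_j$ itself rather than with a potential for it.
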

Here, $\Rm^{D\times m}$ is the space of constant currents $J=c\in\Rm^{D\times m}$.
\begin{proof}
By construction, $\fa(J,J)$ controls $Jw^T$ and $\nabla Ju^T$. A standard Poincar\'e inequality shows that $\fa(J,J)$ controls $Ju^T-c$ as well with $c=\aver{Ju^T}$. This means that $J=Jw^Ta+(Ju^T-c)b+cb$ with $cb$ the only term that is not controlled in the $L^2$ sense yet, i.e., $\|J-cb\|^2 \leq C \fa(J,J)$. So, far, this estimate holds for any vector field $J$, not necessarily divergence-free ones. Let us consider one component $b_j$.  For any smooth, compactly supported $\varphi$ in the open set $X$, we therefore have $|\aver{J_j-b_jc,\nabla\varphi}|\leq C \fa(J,J)$ from the above and  find that $\aver{\nabla \varphi,J_j}=0$ for a (now) divergence-free field $J$ so that $\aver{\nabla \varphi,cb_j}=-\aver{\varphi,c\cdot\nabla b_j}$ is controlled by $\fa(J,J)$. Choosing $\varphi=\chi (c\cdot\nabla b_j)$ (or a regularized version of this term) with $\chi$ smooth compactly supported in $X$ and converging to $\one_X$, we find, with $\hat c=c/|c|$ (assuming $|c|>0$ otherwise we are done) that
\[
  |c|^2 \aver{(\hat c\cdot\nabla b_j)^2} \leq C \fa(J,J).
\]
The above holds after maximizing over $1\leq j\leq m$ and has to hold for all orientations $\hat c$ of $c$. Employing (\ref{eq:osc}), we complete the proof.
\end{proof}
The control of $\fa(J,J)$ is therefore equivalent to that of $|J|^2$ and $|\nabla Ju^T|^2$ for divergence-free vector fields. We cannot expect any a priori energy control for $\nabla Jw^T$. In terms of the original variables $\psi$, this shows that $\nabla\psi a^Ta$ should be bounded in the $L^2$ sense while $\nabla\psi b^T=-L Ju^T$ is only bounded in the $H^{-1}$ sense.

\subsection{Effective tensors}
Let $c$ be a fixed tensor in $\Rm^{D\times m}$. The map
\[
  \tilde J \mapsto \fa(c,\tilde J)
\]
is clearly continuous from $(\mH_\BC,\fa)$ to $\Rm$ using the smoothness assumptions on the coefficients $u$ and $v$ and applying the Cauchy-Schwarz inequality. As a consequence, by the Riesz representation theory, there is a unique solution in $J\in\mH_\BC$ to the problem
\begin{equation}\label{eq:EL}
  \fa(c+J,\tilde J)=0, \qquad \forall \tilde J\in \mH_\BC.
\end{equation}
Moreover, the solution $J\in \mH_\BC$ is linear in $c$ so that
\begin{equation}\label{eq:defbara}
  \fa(c+J,c+J) = |X| (c,\bar \fa c)
\end{equation}
is quadratic in $c$ for some effective linear symmetric map $\bar\fa$ from $\Rm^{D\times m}$ to $\Rm^{D\times m}$, which we may interpret as a tensor $\bar\fa \in \mathbb{R}^{D\times m \times m \times D}$.  We recall that $(c,d)={\rm Tr}( c^Td)$ is the standard inner product on $\Rm^{D\times m}$. Here and below, we denote by $|X|$ the (Lebesgue) volume of the domain $X$. 

The tensor $\bar\fa=\bar\fa_{\BC}$ depends on the domain $X$ and the boundary conditions $\BC=D,\sharp$. Moreover, we deduce from Proposition \ref{prop:stab} that
\begin{equation}\label{eq:bda}
 (c, \bar \fa c) =\frac{1}{|X|}\fa(c+J,c+J)\geq C \|c+J\|_2^2 \geq C  \|c\|^2
\end{equation}
for a positive constant $C$ since $\nabla\cdot f$ and $c$ are orthogonal in the $L^2$ sense (by the Hodge decomposition). This stability result shows that the tensor $\bar \fa$ is positive definite.


The effective tensor $\bar\fa$ is the solution to the following minimization problem:
\begin{lemma}\label{lem:minim}
  The tensor $\bar \fa$ constructed in \eqref{eq:defbara} obeys 
\begin{equation}\label{eq:minim}
   (c, \bar \fa c) = \min_{J\in \mH_b} \frac1{|X|} \fa(c+J,c+J).
\end{equation}
\end{lemma}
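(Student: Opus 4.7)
\medskip

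\noindent\textbf{Proof proposal.} The plan is to exploit the fact that, by Lemma~\ref{lem:a}, $\fa(\cdot,\cdot)$ is an inner product on $\mH_\BC$, so the functional $J\mapsto \fa(c+J,c+J)$ is a strictly convex quadratic form on the Hilbert space $(\mH_\BC,\fa)$. The minimization problem \eqref{eq:minim} is therefore the standard Dirichlet-principle characterization of the solution to the linear variational problem \eqref{eq:EL}, and all that remains is to verify that the Euler--Lagrange equation of the minimization matches \eqref{eq:EL} and to evaluate the minimum.

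First I would expand, for an arbitrary test current $\tilde J\in\mH_\BC$,
\begin{equation*}
  \fa(c+\tilde J,c+\tilde J)=\fa\bigl(c+J+(\tilde J-J),\,c+J+(\tilde J-J)\bigr)
  =\fa(c+J,c+J)+2\fa(c+J,\tilde J-J)+\fa(\tilde J-J,\tilde J-J),
\end{equation*}
where $J\in\mH_\BC$ is the unique solution of \eqref{eq:EL} produced by the Riesz representation theorem (noting that $\tilde J\mapsto -\fa(c,\tilde J)$ is a continuous linear form on $\mH_\BC$ by Cauchy--Schwarz and the boundedness of the coefficients). Since $\tilde J-J\in\mH_\BC$, the cross term vanishes by \eqref{eq:EL}, and since $\fa$ is positive definite on $\mH_\BC$ the remaining term $\fa(\tilde J-J,\tilde J-J)$ is non-negative and equals zero iff $\tilde J=J$. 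Dividing by $|X|$ gives
\begin{equation*}
   \frac{1}{|X|}\fa(c+\tilde J,c+\tilde J)\ \geq\ \frac{1}{|X|}\fa(c+J,c+J),
\end{equation*}
with equality precisely at $\tilde J=J$, which establishes both that the minimum is attained and that the minimizer coincides with the solution of \eqref{eq:EL}.

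Finally, I would identify the value of the minimum with $(c,\bar\fa c)$: by definition \eqref{eq:defbara}, $|X|(c,\bar\fa c)=\fa(c+J,c+J)$ for this $J$, giving \eqref{eq:minim}. I do not expect a genuine obstacle here, as the argument is entirely elementary once Lemma~\ref{lem:a} is in hand; the only point requiring mild care is the assertion that the linear form $\tilde J\mapsto \fa(c,\tilde J)$ is continuous on $(\mH_\BC,\fa)$, which follows from Proposition~\ref{prop:stab} (providing an $L^2$-bound on $\tilde J$) together with the $C^1$ smoothness of the coefficients entering through $w$ and $u$ in the definition of $\fa(c,\tilde J)$.
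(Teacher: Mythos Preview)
Your proposal is correct and takes essentially the same approach as the paper: both arguments reduce to the standard fact that, for a positive-definite bilinear form, the Riesz solution of \eqref{eq:EL} coincides with the minimizer of the associated quadratic functional $J\mapsto\fa(c+J,c+J)$. Your direct ``complete-the-square'' expansion around $J$ and the paper's invocation of the equivalence $\fa(J,\tilde J)=S(\tilde J)\Leftrightarrow J=\arg\min\bigl(\tfrac12\fa(J,J)-S(J)\bigr)$ are two phrasings of the same elementary computation.
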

\begin{proof}
  Since $\fa$ is a bilinear form, the solution of $\fa(J,\tilde J)=S(\tilde J)$ for all $\tilde J\in\mH_b$ is also the unique solution to the minimization problem
\[
   \min_{J\in \mH_b} \frac12 \fa(J,J)-S(J).
\]
Applying this to $S(J)=-\fa(c,J)$ shows that $J$ is also the solution to the following minimization
\[
   \min_{J\in \mH_b} \frac12 \fa(c,c) +  \frac12 \fa(J,J) + \fa(c,J) =  \min_{J\in \mH_b} \frac12 \fa(c+J,c+J).
\]
But evaluated at $J$ satisfying \eqref{eq:EL}, this is precisely $\frac{|X|}{2} (c, \bar \fa c)$.
\end{proof}
The two effective tensors $\bar \fa_\BC$ obtained  with boundary conditions $\BC=\sharp$ and $\BC=D$ may therefore both be obtained as the minimal (energy) value of a minimization problem. There is no reason to expect both tensors to be equal. However, since $f\in\mH_D\implies f\in \mH_\sharp$, then $\bar\fa_D\geq\bar\fa_\sharp$ in the sense of symmetric tensors.  

\subsection{Small oscillations}\label{sec:small}
We now show that oscillations in the coefficients are crucial to obtain non-degenerate diffusion.  Let us assume that all coefficients $a$ and $b$ are given by $a=a_0+\lambda a_1$, $b=b_0+\lambda b_1$, with $a_0,b_0$ constant and non-vanishing, $a_1$ and $ b_1$ oscillatory but smooth coefficients, and $\lambda \ll 1$ sufficiently small: $a$ and $b$ are nearly, but not exactly, constant. Let $w_0$ be the corresponding constant coefficient, defined analogously to (\ref{eq:dual}). Then we have the following result.
\begin{proposition}\label{prop:smallosc}
 Under the above hypotheses, the effective tensors $\fa_\BC$ have $D(m-1)$ eigenvalues of order $O(1)$ and $D$ eigenvalues of order $O(\lambda^2)$ in the sense that for any tensor $c$ such that $c w_0^T=0$ then $(c,\fa c) = O(\lambda^2)$ while for any tensor $c$ orthogonal to that set of dimension $D$, then $(c,\fa c)=O(1)$.
\end{proposition}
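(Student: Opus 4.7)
The plan is to identify the $D$-dimensional degenerate subspace explicitly, establish the upper bound on it by direct substitution of a convenient test current, and obtain the complementary lower bound on its orthogonal complement by combining Young's inequality with Proposition~\ref{prop:stab}, exploiting a precise cancellation between factors of $\lambda$.

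First I would characterize $V_1 := \{c \in \Rm^{D\times m} : cw_0^T = 0\}$. Since $w_0$ has rank $m-1$ with kernel spanned by $b_0^T$ (from the dual identity $w_0 b_0^T = 0$), each row of such a $c$ must lie in $\mathrm{span}(b_0)$, so $V_1 = \{\xi b_0 : \xi \in \Rm^D\}$ has dimension $D$. Its orthogonal complement in the trace inner product is $V_2 := \{c : cb_0^T = 0\}$, of dimension $D(m-1)$; restricted to $V_2$ the map $c\mapsto cw_0^T$ is injective, giving $|cw_0^T|^2 \geq \alpha |c|^2$ for some $\alpha > 0$. For the upper bound on $V_1$, I would use Lemma~\ref{lem:minim} with the test current $J = 0$ to obtain $(c,\bar\fa c)\leq \fa(c,c)/|X|$ for $c = \xi b_0$. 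Expanding $w = w_0 + \lambda w_1 + O(\lambda^2)$ and $u = u_0 + \lambda u_1 + O(\lambda^2)$, the dual identities $b_0 w_0^T = 0$ and $b_0 u_0^T = 1$ give $b_0 w^T = O(\lambda)$ and $\nabla(b_0 u^T) = \lambda \nabla(b_0 u_1^T) + O(\lambda^2) = O(\lambda)$ pointwise. Hence both $|cw^T|$ and $|S(cu^T)|$ are $O(\lambda|c|)$ pointwise, $\fa(c,c) = O(\lambda^2 |c|^2)|X|$, and $(c, \bar\fa c) = O(\lambda^2 |c|^2)$.

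For the lower bound on $V_2$, I would start from $\fa(c+J, c+J) \geq \|(c+J)w^T\|^2$ and apply Young's inequality $|x+y|^2 \geq \tfrac12|x|^2 - |y|^2$ with $x = (c+J)w_0^T$, $y = (c+J)(w-w_0)^T$ to get
\[
\fa(c+J, c+J) \geq \tfrac12 \|(c+J)w_0^T\|^2 - \|(c+J)(w-w_0)^T\|^2.
\]
Since $\int J = 0$ and $w_0$ is constant, $\|(c+J)w_0^T\|^2 = |X||cw_0^T|^2 + \|Jw_0^T\|^2 \geq \alpha|X||c|^2$, while smoothness of $w_1$ gives $\|(c+J)(w-w_0)^T\|^2 \leq C\lambda^2 \|c+J\|^2$. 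Crucially, because $b_0$ is constant one has $\fO = \lambda^2 \fO_1$ with $\fO_1 := \min_\theta \max_j \aver{(\theta\cdot\nabla b_{1,j})^2} > 0$, and Proposition~\ref{prop:stab} applied to $c+J \in \check\mH_\BC$ yields $\|c+J\|^2 \leq (C_1/\lambda^2\fO_1)\fa(c+J,c+J)$. Combining,
\[
\fa(c+J,c+J) \geq \tfrac{\alpha}{2}|X||c|^2 - \tfrac{CC_1}{\fO_1}\fa(c+J,c+J),
\]
so after rearranging and taking the infimum over $J$, $(c,\bar\fa c) \geq \alpha/[2(1+CC_1/\fO_1)]\cdot|c|^2$, independent of $\lambda$. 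The eigenvalue count claimed in the proposition then follows from the min-max characterization applied to $\bar\fa$ on $V_1$ and on $V_2$.

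The main obstacle is the interplay of small parameters: Proposition~\ref{prop:stab} alone yields $(c,\bar\fa c) \gtrsim \fO |c|^2 = O(\lambda^2)|c|^2$, which is sharp on $V_1$ but far too weak on $V_2$. The $\lambda^2$ factor lost in Young's inequality must precisely cancel the $1/\lambda^2$ coming from the stability bound in order to recover an $O(1)$ lower bound on $V_2$; this cancellation hinges on the fact that $w_0, u_0$ are constant, so that $\|(c+J)w_0^T\|^2$ decouples cleanly into a constant-$c$ contribution and a mean-zero-$J$ contribution.
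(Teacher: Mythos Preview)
Your proposal is correct and follows the same strategy as the paper: test with the constant current $J=0$ for the upper bound, and for the lower bound on $V_2$ extract $|cw_0^T|^2$ from $\|(c+J)w^T\|^2$ by exploiting the constancy of $w_0$, with the $O(\lambda^2)$ lower bound on $V_1$ coming directly from Proposition~\ref{prop:stab}. The only notable difference is in how the perturbative error is absorbed: the paper expands the square and asserts that the cross term $2\aver{(\nabla\cdot f)w^T,cw^T}$ is $O(\lambda)$ after integration by parts, whereas your Young-inequality split $w=w_0+(w-w_0)$ together with the $\lambda^{-2}$ from Proposition~\ref{prop:stab} makes the cancellation of small parameters fully explicit.
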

Here, we mean $x=O(y)$ if $C^{-1}x\leq y\leq Cx$ for some constant $C>0$ independent of $y$.
\begin{proof}
 The oscillation coefficient $\fO$ is clearly of order $\lambda^2$. The construction of $\fa$ and the results of Proposition \ref{prop:stab} show that for some constant $C$ independent of $J$ and $\fO$, we have
 \[
  C^{-1}\left(\|Jw^T\|^2+\|\nabla Ju^T\|^2 + \fO \aver{|J|^2} \right)\leq \fa(J,J) \leq C( \|Jw^T\|^2+\|\nabla Ju^T\|^2) .
 \]
 Here, $J$ is any admissible current. The homogenized coefficient $(c,\bar\fa c)$ obtained as a minimization is smaller than the variational formulation obtained for $J=c$. By assumption on the coefficients, this means that $(c,\bar\fa c)\leq \|c w_0^T\|^2 + O(\lambda^2)$, which provides the upper bound.
 
 To derive the lower bound, let $J=\nabla\cdot f+c$ realize the minimum of the optimization problem (\ref{eq:minim}). Consider the term
 \[
   \|Jw^T\|^2=\aver{(\nabla\cdot f+c)w^T,(\nabla\cdot f+c)w^T} = \aver{|c w^T|^2} + \aver{|(\nabla\cdot f)w^T|^2} + 2\aver{\nabla\cdot f w^T, cw^T}.
 \]
 By integration by parts, the last term is of order $O(\lambda)$, which shows that $(c,\bar\fa c)$ is bounded below by $|cw_0^T|^2$ up to a negligible term, and hence is of order $O(1)$ when that last term is. Let us now assume that $cw_0^T=0$. Then, Proposition \ref{prop:stab} or the above estimate shows that $(c,\bar\fa c)$ is bounded below by $\fO\aver{|\nabla\cdot f+c|^2}$, itself bounded below by $\fO|c|^2$ up to a multiplicative constant.
\end{proof}
The above result shows that oscillations in the coefficients $b$ are necessary to obtain a finite current $\aver{J}$ of order $\lambda^{-2}$ for an average linear profile such that  $\aver{\nabla\psi}$ is of order $O(1)$.


%
%
\subsection{Natural boundary conditions}
%
%
We now consider a dual minimization problem associated with natural (Neumann) boundary conditions. The constants $c$ above were imposed on the currents. As we saw, the solution of the problem may then be interpreted as a resulting macroscopic linear displacement profile for the terms $\psi$. Alternatively, we may impose a linear profile on $\psi$ and deduce an average current. This is obtained by looking at the following minimization problem, which we recast as a maximization problem by sign change. Let $p$ be a constant tensor in $\Rm^{D\times m}$, which we want to interpret as an average profile for $-\nabla\psi$. We then solve the problem:
\begin{equation}\label{eq:varp}
  \mbox{ Find } J \in \mH \ \mbox{ solution of } \quad \max_{J\in \mH} -\frac12 \fa(J,J) + \aver{p,J}.
\end{equation}
This problem is equivalent to its Euler-Lagrange version, which takes the form
\begin{equation}\label{eq:ELp}
  \mbox{ Find } J \in \mH \ \mbox{ solution of } \quad  \fa(J,\tilde J) =\aver{p,\tilde J},\quad \forall \tilde J\in \mH.
\end{equation}

The map $J\to\aver{p,J}$ is continuous on $(\mH,\fa)$ when $\fO>0$ by the Cauchy-Schwarz inequality, so that by the Riesz representation theorem, the above equation admits a unique solution in $\mH$, which is also given by the above maximization problem. This shows that the maximum is obtained for an energy
\[
  -\frac12 \fa(J,J) + \aver{p,J} = \frac12 \fa(J,J)  = \frac{|X|}2 (p, \bar\fb p)
\]
for a constant tensor $\bar\fb$ since $J$ is linear in $p$.

The above homogenized coefficient may also be computed with periodic boundary conditions. This requires us to introduce the product space $\check\mH_\sharp=\Rm^{D\times m}\times\mH_\sharp$ of constant tensors $c$ and $J\in\mH_\sharp$. We then state the maximization problem as
\begin{equation}\label{eq:varpper}
  \mbox{ Find } (c,J) \in \check\mH_\sharp \ \mbox{ solution of } \quad \max_{(c,J)\in \check\mH_\sharp} -\frac12 \fa(c+J,c+J) + \aver{p,c}.
\end{equation}
Note that $\aver{p,J}=0$ for all $J\in\mH_\sharp$.
Its Euler-Lagrange equation is
\begin{equation}\label{eq:ELpper}
  \mbox{ Find } (c,J) \in \check\mH_\sharp \ \mbox{ solution of } \quad  \fa(c+J,\tilde c+  \tilde J) = \aver{p,\tilde c},\quad \forall (\tilde c,\tilde J)\in \check\mH_\sharp .
\end{equation}

We then find as for the preceding problem in \eqref{eq:EL} with periodic boundary conditions that
\[
   \fa(c+J,c+J)  = {|X|} (p, \bar\fb_\sharp p) =  \aver{p,J} = \aver{p,c}. 
\]

We have by now obtained four different homogenized coefficients for equations on a fixed periodic domain $X$ with periodic, Dirichlet, or natural boundary conditions. 
\begin{proposition}\label{prop:comp}
 We have the following ordering:
 \begin{equation}\label{eq:compcoefs}
     \bar\fb^{-1}\leq\bar\fb_\sharp^{-1}=\bar \fa_\sharp\leq \bar \fa_D.
 \end{equation}
\end{proposition}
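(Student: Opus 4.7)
The plan is to prove the three links in the chain $\bar\fb^{-1}\leq\bar\fb_\sharp^{-1}=\bar\fa_\sharp\leq\bar\fa_D$ separately, each time exploiting the variational characterization of the tensor in question together with a natural nesting of the admissible spaces. All four tensors are symmetric and positive definite by Proposition \ref{prop:stab}, so order-reversal under inversion is available whenever we need it.

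For the rightmost inequality $\bar\fa_\sharp\leq\bar\fa_D$, I would simply invoke Lemma \ref{lem:minim}: both tensors arise as the minimum of the same functional $|X|^{-1}\fa(c+\cdot,c+\cdot)$, but over the respective admissible spaces $\mH_\sharp$ and $\mH_D$. Since $\mH_D\subset\mH_\sharp$ (as already noted in the excerpt, Dirichlet antisymmetric tensors extend periodically), the minimum over the larger space is no larger, giving $(c,\bar\fa_\sharp c)\leq(c,\bar\fa_D c)$ for every constant $c\in\Rm^{D\times m}$.

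For the middle identity $\bar\fb_\sharp^{-1}=\bar\fa_\sharp$, the strategy is Legendre duality. In the periodic maximization \eqref{eq:varpper}, I would perform the maximum in two steps: first over $J\in\mH_\sharp$ for fixed $c$, which by Lemma \ref{lem:minim} yields exactly $-\tfrac{|X|}{2}(c,\bar\fa_\sharp c)$; then over $c\in\Rm^{D\times m}$. Using $\aver{p,c}=|X|(p,c)$ for constants, this reduces to
\begin{equation*}
\tfrac{1}{2}(p,\bar\fb_\sharp p)=\max_{c\in\Rm^{D\times m}}\Bigl[-\tfrac{1}{2}(c,\bar\fa_\sharp c)+(p,c)\Bigr].
\end{equation*}
Since $\bar\fa_\sharp$ is symmetric positive definite, the unique maximizer is $c=\bar\fa_\sharp^{-1}p$, and substituting gives $(p,\bar\fb_\sharp p)=(p,\bar\fa_\sharp^{-1}p)$ for every $p$, hence $\bar\fb_\sharp=\bar\fa_\sharp^{-1}$.

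For the leftmost inequality, I would first prove $\bar\fb\geq\bar\fb_\sharp$ (which gives the claim after inversion). The natural formulation \eqref{eq:varp} maximizes over $\mH$, while \eqref{eq:varpper} maximizes over $\check\mH_\sharp=\Rm^{D\times m}\oplus\mH_\sharp$. The map $(c,J_0)\mapsto c+J_0$ embeds $\check\mH_\sharp$ into $\mH$ (constants belong to $\mH$ by the definition in the excerpt, and smooth periodic divergences belong to $\mD\subset\mH$). On this subspace the two objective functionals agree, since $\aver{p,c+J_0}=\aver{p,c}$ because $J_0\in\mH_\sharp$ has zero mean (as noted in the excerpt, $\aver{p,J}=0$ for $J\in\mH_\sharp$). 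Maximizing over the larger space $\mH$ therefore cannot give a smaller value, so $(p,\bar\fb p)\geq(p,\bar\fb_\sharp p)$ for every $p$, and inverting the inequality of positive definite symmetric tensors yields $\bar\fb^{-1}\leq\bar\fb_\sharp^{-1}$. The main obstacle I anticipate is purely technical: one must check that these subspace inclusions persist through the Hilbert completions (i.e., that the $\fa$-topologies are compatible on the overlap, and that divergence-freeness and the implicit boundary behavior of elements of the smaller space are preserved when viewed in the larger one), which should follow from the continuity of $\fa$ and approximation by smooth representatives in $\mD_\sharp$, $\mD_D$, $\mD$.
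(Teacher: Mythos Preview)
Your proof is correct and follows essentially the same approach as the paper: the two inequalities are handled identically via nesting of the admissible spaces, and for the middle equality your explicit two-step maximization (Legendre duality) is a clean variant of the paper's argument, which instead identifies the Euler--Lagrange equations \eqref{eq:EL} and \eqref{eq:ELpper} via the Riesz map $c\mapsto p=\bar\fa_\sharp c$.
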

\begin{proof}
  All coefficients are positive definite operators from $\Rm^{D\times m}$ to itself and hence invertible as we already saw.
 Two relations are clear from the minimization procedures. Since functions with vanishing Dirichlet conditions (recall Definition \ref{def:testfunctions}) are also periodic, then $\bar \fa_\sharp\leq \bar \fa_D$. Since arbitrary functions in $\mH$ have more general gradients than $c+\nabla\cdot f$ for $\nabla\cdot f\in \mH_\sharp$, then $\bar\fb\geq\bar\fb_\sharp$. 
 
 It remains to show the middle equality. Consider the map from $c$ to $p=\bar\fa_\sharp c$. We have for all $(\tilde c,\tilde J)\in \check\mH_\sharp$ that
 \begin{equation}\label{eq:relpac}
     \fa(c+J,\tilde c+\tilde J)= \fa(c + J,\tilde c) = \aver{p,\tilde c},
 \end{equation}
 where the first equality comes from the Euler-Lagrange constraint $\fa(c+J, \tilde J)=0$ in \eqref{eq:EL} and where $p$ comes from the Riesz representation since $\tilde c\mapsto \fa(c+\tilde J,\tilde c)$ is a continuous linear map. The above $p$ is the one giving rise to the homogenized coefficient $\bar\fa_\sharp$ since by construction
 \[
   \fa(c+J,c+J) =\aver{p,c}= {|X|}(c,\bar\fa_\sharp c).
 \]
 Now, \eqref{eq:relpac} is also the defining equation for $c=c(p)$ in \eqref{eq:ELpper}. This shows that $\aver{p,c}={|X|}(p,\bar\fb_\sharp p)={|X|}(c,\bar\fa_\sharp c)$ and this concludes the derivation.
\end{proof}


We do not expect equality in the above relations as boundary conditions influence the effective coefficient. These coefficients become asymptotically equal in the setting of periodic (or random) coefficients as the size of the domain increases and boundary effects become negligible. We will (partially) revisit this question in subsequent sections.

A notable feature of the Dirichlet and natural boundary conditions is that the homogenized coefficients enjoy a subadditive property. The minimization with Dirichlet boundary conditions leading to the calculation of $\bar\fa_D$ involves more test functions as the domain's size increases so that its value decreases (is subadditive on average) with the domain's size. The maximization with natural boundary conditions leading to $\bar\fb$ involves less test functions as the domain size increases so that its value also decreases (is also subadditive on average) with the domain's size. This means that $\bar\fa=\bar\fb^{-1}$ then increases in the same sense. We will come back to these features when discussing the setting of stationary random coefficients.


%
%
\subsection{Variational framework for original variables}

Once $J$ is uniquely determined, we can come back to the original variables $\psi$ and $v$. We have $v=Ju^T$ unambiguously defined. We wish to show that $\nabla\psi$ in \eqref{eq:psi} is indeed a gradient. For this, we observe that it is characterized by
\[
 - \aver{\nabla\psi,\tilde J}=\fa(J,\tilde J)= 0,\qquad \forall \tilde J\in \mH_b.
\]
The case $D=1$ is somewhat singular as the spaces $\mH_\BC=\{0\}$ are trivial. We consider $D=1$ in the next section and assume for the rest of the section that $D\geq2$.
Let $J_\psi\equiv \nabla \psi$. By the Hodge decomposition either on a periodic domain or on an open domain \cite[Chapter 5]{Taylor-PDE-1}, we may write it as $\nabla\psi+\nabla\cdot f$ and deduce from the above constraint that $\nabla\cdot f=0$ so that $J_\psi$ is indeed a gradient field. When $X$ is the periodic unit cube, then the Hodge decomposition shows that $\nabla\psi$ may be written as $\nabla\tilde\psi+c$ with $\tilde\psi$ periodic and $c$ a harmonic form on the torus, i.e., a constant vector.

To obtain uniqueness of a solution, as well as relate effective, homogenized coefficients, we need to introduce a natural functional setting for $\nabla\psi$. Let us introduce the (topological) dual spaces $\mH^*_N=(\check\mH_D)^*$ and $\mH_\sharp^*=(\check\mH_\sharp)^*$, which we summarize as $\mH^*_\CB=(\check\mH_\BC)^*$. Symmetrically, we can add constant vectors to the dual spaces by introducing $\check\mH^*_\CB=(\mH_\BC)^*$.  By the Riesz representation theory, duals to Hilbert spaces are isometric to the Hilbert spaces. Moreover, the isometry may be realized by a linear map since our inner products are real-valued, and this linear map is given by the operator $\mA$ in \eqref{eq:psi} since
\[
  \fa(J,\tilde J) = \aver{\mA J,\tilde J}
\]
with the right-hand side the duality product between the Hilbert spaces. This shows that $-\nabla\psi=\mA J$ with $\nabla\psi\in\mH^*_\CB$ with the appropriate (inherited) boundary conditions.

We have therefore constructed a solution $(\psi,v)$ with $\nabla\psi\in\mH^*_\CB$ and $v=Ju^T$. Let us assume the existence of two such solutions. The difference of such solutions would solve the equation with no source terms. Since integrations by parts in \eqref{eq:galsaddle} on $X$ are allowed by construction of the functional spaces, we deduce that $Sv=0$ and $A\psi=0$. This implies that $v$ is constant. This in turns implies that $B\psi=0$. But $A\psi=0$ and $B\psi=0$ implies that $\nabla\psi=0$. Equation \eqref{eq:syst} now implies that $\nabla\cdot vb=0$ so that $v=0$ by the oscillation property, and hence the uniqueness of the solution $(\nabla\psi,v)$. To ensure that $\psi$ is uniquely defined, we can prescribe its value at one point, or its average over $\partial X$. The detour by a variational formulation for the currents therefore provides an existence and uniqueness result for the system \eqref{eq:syst}.

In both functional settings, we have $-\nabla\psi=\mA J$ so that 
\[
  -\aver{\nabla\psi,c} = \aver{\mA J,c} = \fa(c+J,c+J) = |X| (c,\bar\fa c) = -(\aver{\nabla\psi},c).
\]
This shows that we obtained what we were aiming for, namely a relation of the form
\begin{equation}\label{eq:homog}
    -\aver{\nabla \check\psi} = \bar \fa \aver{\check J} \in \Rm^{D\times m},
\end{equation}
with $\check\psi$ the sum of a linear profile and a component satisfying prescribed boundary conditions (so that $\aver{\nabla\check\psi}=\aver{p+\nabla\psi}=p$) and $\check J=\nabla\cdot \check f$ for $\check f$ a divergence free field also written as the sum of a linear profile and a component satisfying prescribed boundary conditions (so that $\aver{\check J}=\aver{c+J}=c$).

The interpretation for the negative sign above is that linear profiles for $\check\psi$ increasing in one direction generate currents propagating in the opposite direction, which is physically expected.

The stability result \eqref{eq:bda} shows that $\bar\fa$ is bounded below by a positive constant (as a symmetric tensor). This shows its invertibility and the fact that
\[
 \aver{\check J} = -\bar \fa^{-1} \aver{\nabla\check\psi}
\]
if we want to compute the average currents associated to linear profiles imposed on the variables $\check\psi$.

\medskip

The above derivation computes $\aver{\nabla\psi}$ from knowledge of an average current $c$. In order to compute $\aver{J}$ from knowledge of an average gradient $p$, we look for a variational formulation directly for $\nabla\psi$. As we saw in the above derivation, $J=-\mA^{-1}\nabla\psi$, which implies investigating the inverse of $\mA$.

Restricting ourselves to the periodic setting, we derive such a variational formulation for the variable $\psi$ and relate it to the variational formulation for the currents. 

We recall the equations
\[
   Lv=-\nabla\psi b^T,\qquad J= -\nabla\psi a^Ta + vb,\quad \nabla\cdot J=0. 
\]
The objective is to find $\psi$ such that $\nabla\psi = p + \nabla\tilde\psi$ with $\tilde\psi$ periodic. 

We first eliminate $v$ from the above equations. By the Fredholm alternative, this requires the condition
\[
  \aver{\nabla\psi b^T}=0 \in \Rm^{D\times1}
\]
and we then find that 
\[
  v= -L^{-1}(\nabla\psi b^T) + v_0,\qquad v_0\in \Rm^{D\times1}.
\]
After elimination, the equation for $\psi$ is therefore the following constrained system of equations
\[
   -\nabla\cdot  \Big( \nabla\psi  a^Ta + L^{-1}(\nabla\psi b^T) b + v_0b\Big)=0,\qquad \aver{ \nabla\psi b^T}=0.
\]
This may be recast formally with $\mA^{-1}\nabla\psi:=\nabla\psi  a^Ta + L^{-1}(\nabla\psi b^T) b$ as
\[
   -\nabla\cdot  \Big( \mA^{-1}\nabla\psi+ v_0b\Big)=0,\qquad \aver{ \nabla\psi b^T}=0.
\]
We wish to replace the constraint for $\nabla\psi$ by an equivalent one for test functions. Clearly $\aver{(p+\nabla\tilde\psi)b^T}=0$ shows that the above constraint does not hold for $\tilde\psi$ (yet). We thus look for one possible solution $\psi_0$ periodic such that 
\[
  \aver{(p+\nabla\psi_0) b^T}=0.
\]
Finding a solution requires that $b$ oscillate sufficiently for otherwise, we get $\aver{pb^T}=0$ since $\aver{\nabla\psi_0}=0$ by the periodicity assumption. 
The construction of $\psi_0$ requires the oscillatory constraint on $b$ and goes as follows. We decompose $p=\sum_{i=1}^D e_i\otimes p_i$ for $p_i$ arbitrary $m-$(co)vectors. By linearly, we construct a term $\psi_0$ for each $p=e_i\otimes p_i$. The above constraint is therefore
\[
  e_i \aver{p_ib^T} + \aver{\nabla\psi_0 b^T}=0.
\] 
Let $j$ be an index such that $b_j$ genuinely oscillates in all directions. We then set $\psi_{0k}=0$ for all $k\not=j$ and, to simplify notation, still denote by $\psi_0$ and $b$ the components $\psi_{0k}$ and $b_k$. We choose $\psi_0(x)=\varphi(x_i)$ so that the above constraint becomes
\[
  0= \aver{p_ib^T}  + \aver{\varphi'(x_i) \bar b_i(x_i)}
\]
where we denoted by $\bar b_i(x_i)$ the average of $b$ in all variables but $x_i$. It remains to choose 
\[
  \varphi(x_i) = \alpha_i (\bar b_i)'(x_i),\qquad \alpha_i = \dfrac{\aver{p_ib^T}}{\aver{|\bar b_i'|^2}},
\]
where the above denominator does not vanish by the oscillation assumption. 


Once $\psi_0$ has been chosen, we look for solutions of the form
\[
  \psi = x^Tp + \psi_0 + \tilde\psi, \qquad \nabla \tilde\psi \in \mG_\sharp,
\]
where the Hilbert space $\mG_\sharp$ is the completion for the inner product
\[
  \fb(G,G) = \aver{\mA^{-1}G,G}
\]
of smooth vector fields of the form $G=\nabla\tilde\psi$ with $\tilde \psi$ periodic and satisfying the constraint $\aver{ \nabla\tilde\psi b^T}=0$. That the above is an inner product is clear since $\fb(G,G)=0$ implies that $\mA^{-\frac12}G=0$ and hence $G=0$. 

The variational formulation of the above problem is therefore: 
\[
 \mbox{ Find } \nabla\tilde\psi \in \mG_\sharp \mbox{ such that } \fb(p+\nabla\psi_0+\nabla\psi,\nabla\check\psi) =0 \quad \mbox{ for all } \nabla\check\psi\in\mG_\sharp.
\]
We note that the term involving $v_0$ cancels in the formulation by the constraint on $\tilde \psi$. The Euler-Lagrange equations of the above variational formulation are again the above partial differential equation, where $v_0$ is the Lagrange multiplier associated to the constraint $\aver{\nabla\psi b^T}=0$.

This problem can then be recast as the following minimization 
\[
  \min_{\nabla\tilde\psi\in\mG_\sharp} \fb(p+\nabla\psi_0+\nabla\tilde\psi,p+\nabla\psi_0+\nabla\tilde\psi) = (p,\bar\fa_\sharp^{-1}p).
\]
Note that rather than a constrained minimization problem, we could also relax the averaging constraints in $\mG_\sharp$ and add to the Lagrangian a term of the form $(V,\aver{\nabla\psi b^T})$ for $V$ a vector of Lagrange multipliers.

\section{Bounds on homogenized coefficients}
\label{sec:WF}

We now describe some elementary estimates of $\bar\fa$.  Upper bounds on $\bar\fa$ are easily obtained and generalize the Voigt bounds for diffusion equations \cite{JKO-SV-94}: 
\begin{corollary} \label{cor:1d}
In the system (\ref{eq:syst}) with $a$ of rank $m-1$, \begin{equation}
(c,\bar\fa c) \le \left\langle \lVert cw^T \rVert^2 +  \lVert \nabla c u^T \rVert^2  \right\rangle .  \label{eq:uppera}
\end{equation}
Note that the derivative only acts on $u^T$ in the second term.  In $D=1$, (\ref{eq:uppera}) is saturated.  In $D>1$, if the background has small oscillations as defined in Section \ref{sec:small}, then (\ref{eq:uppera}) is asymptotically exact as $\lambda\rightarrow 0$.
\end{corollary}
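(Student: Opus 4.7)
The plan is to extract all three assertions from the variational characterization of $\bar\fa$ in Lemma \ref{lem:minim}, namely $(c,\bar\fa c)=\min_{J\in\mH_\BC}\tfrac{1}{|X|}\fa(c+J,c+J)$.

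\textbf{Upper bound.} The trial choice $J=0$ (equivalently $f=0$ in the representation $J=\nabla\cdot f$) is admissible in $\mH_\BC$, so by the definition of $\fa$ in \eqref{eq:psiJ},
\[
(c,\bar\fa c)\;\le\;\tfrac{1}{|X|}\fa(c,c)\;=\;\tfrac{1}{|X|}\langle cw^T,cw^T\rangle+\tfrac{1}{|X|}\langle S(cu^T),S(cu^T)\rangle.
\]
Since $c$ is a constant tensor, the gradient and divergence in $S$ act only on $u^T$, yielding the claimed bound \eqref{eq:uppera} (with $\|\nabla cu^T\|^2$ understood to include the $\eta,\zeta$ weights carried by $S$).

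\textbf{Saturation in $D=1$.} Elements of $\mH_\BC$ take the form $J=\nabla\cdot f$ with $f^{j}_{lk}$ antisymmetric in $(j,l)$, as in \eqref{eq:Jf}. When $D=1$ the only such tensor is $f=0$, so $\mH_\BC=\{0\}$ and the only admissible trial in the minimization is $J=0$, forcing equality in \eqref{eq:uppera}.

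\textbf{Asymptotic exactness for $D>1$ in the small-oscillation regime.} With $a=a_0+\lambda a_1$, $b=b_0+\lambda b_1$ and the corresponding reciprocal matrices $u=u_0+O(\lambda)$, $w=w_0+O(\lambda)$ with $u_0,w_0$ constant, the two pieces of the Voigt upper bound satisfy $\langle|cw^T|^2\rangle=|cw_0^T|^2+O(\lambda)|c|^2$ and $\langle\|\nabla cu^T\|^2\rangle=O(\lambda^2)|c|^2$, the latter because $\nabla u_0=0$ removes the $O(1)$ contribution. Proposition \ref{prop:smallosc} provides matching lower bounds: $(c,\bar\fa c)$ is of order $|cw_0^T|^2$ on directions with $cw_0^T\neq 0$, and of order $\fO|c|^2\asymp\lambda^2|c|^2$ on the $D$-dimensional degenerate subspace $\{cw_0^T=0\}$. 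Thus both sides of \eqref{eq:uppera} share the same leading scaling in each spectral direction, yielding asymptotic exactness.

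The main obstacle is the degenerate subspace $\{cw_0^T=0\}$: there both sides are $O(\lambda^2)$ and matching scalings is weaker than matching leading coefficients. The cleanest route is a direct $\lambda$-expansion of the minimizer $J_*$ in \eqref{eq:minim}. Writing $J_*=\lambda J_1+o(\lambda)$ and substituting into the Euler--Lagrange equation \eqref{eq:EL}, one checks that $J_1$ enters $\fa(c+J_*,c+J_*)$ only at order $\lambda^3$, so that the $O(\lambda^2)$ coefficient of $(c,\bar\fa c)$ coincides with that obtained from the Voigt trial $J=0$. The argument parallels the proof of Proposition \ref{prop:smallosc} but requires tracking one further order in $\lambda$.
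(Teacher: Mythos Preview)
Your upper bound and $D=1$ arguments are correct and coincide with the paper's. For the asymptotic exactness, your first paragraph---matching $|cw_0^T|^2$ on the nondegenerate directions and matching the $\lambda^2$ scaling on the degenerate subspace via Proposition~\ref{prop:smallosc}---is exactly what the paper proves. The paper's phrasing is slightly different: it argues that any competitor with $\fa(J,J)=O(\lambda^2)$ must satisfy $J=c+\lambda J_1+O(\lambda^2)$, bounds $\fa(J,J)$ at order $\lambda^2$ by a sum of four separate squares, and observes this bound is minimized at $J_1=0$; this is the same order-matching conclusion you reach.

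Your last paragraph, which tries to upgrade order-matching to equality of the $O(\lambda^2)$ coefficients, has a gap. From the Euler--Lagrange identity $\fa(c+J_*,c+J_*)=\fa(c,c)-\fa(J_*,J_*)$ and $J_*=\lambda J_1+o(\lambda)$ one gets
\[
\fa(J_*,J_*)=\lambda^2\big(\|J_1w_0^T\|^2+\|S(J_1u_0^T)\|^2\big)+O(\lambda^3),
\]
which is $O(\lambda^2)$, not $O(\lambda^3)$, unless $J_1=0$. The order-$\lambda$ Euler--Lagrange equation is
\[
\langle cw_1^T+J_1w_0^T,\tilde Jw_0^T\rangle+\langle S(cu_1^T+J_1u_0^T),S(\tilde Ju_0^T)\rangle=0\qquad\forall\,\tilde J\in\mH_\BC,
\]
so $J_1$ vanishes only if the source terms $\langle cw_1^T,\tilde Jw_0^T\rangle+\langle S(cu_1^T),S(\tilde Ju_0^T)\rangle$ vanish for every test current, which you have not checked and which is not obvious. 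The paper does not establish (and its proof does not claim) this stronger coefficient-level statement either, so your first paragraph already delivers what the corollary---as proved in the paper---actually asserts.
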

\begin{proof}
 A straightforward application of Lemma \ref{lem:minim} proves the inequality. 
 
 \ml{For} $D=1$, the spaces $\mathcal{H}_{\mathrm{b}} = \lbrace \emptyset\rbrace$, so $J=c$ is the only allowable trial current.  By construction, it must be optimal.
 
 \ml{For} $D>1$, if $\lambda=0$, \ml{then} (\ref{eq:uppera}) gives $(c,\bar\fa c) = 0$ for the $D$ components of $c$ obeying $cw_0^T = 0$, and $(c,\bar\fa c) = O(1)$ for the remaining $D(m-1)$ components.   Since $\bar\fa$ is smooth in $\lambda$, by Proposition \ref{prop:smallosc} the $D(m-1)$ large coefficients of $\bar\fa$ are given by $(c,\bar\fa c) = \lVert c w_0^T \rVert^2 $.  For the remaining $D$ components of $c$, let $w=w_0 + \lambda w_1 + \cdots$ and $u = u_0 + \lambda u_1 + \cdots$.
If $Ju_0^T$ is not constant, the second term in (\ref{eq:uppera}) is $O(1)$;  if $Jw_0^T$ is spatially fluctuating but zero-mean, the first term in (\ref{eq:uppera}) is $O(1)$.  Hence the optimum must be $J=c+\lambda J_1 + O(\lambda^2)$.  At $O(\lambda^2)$, \[
\fa(J,J) \le C\lambda^2 \left\langle \lVert cw_1^T \rVert^2 +  \lVert \nabla c u^T_1 \rVert^2 + \lVert J_1w_0^T \rVert^2 + \lVert \nabla J_1 u^T_0 \rVert^2 \right\rangle .
\]
Since the four contributions above are additive, the minimum occurs when $J_1=0$.
\end{proof}

Observe that (\ref{eq:uppera}) depends on both $w$ and $\nabla u$:  although the system (\ref{eq:syst}) is of mixed order, it is generally required to have each term non-vanishing.


The $D$ $O(\lambda^2)$ eigenvalues of $\bar\fa$ have been previously computed in the physics literature and are proportional to the spectral weight of hydrodynamic correlation functions in many-body (quantum) systems \cite{andreev, lucas2015, lucashartnoll2017}.


\subsection{Dirac fluid}
As an example, we now return to the Dirac fluid equations (\ref{eq:systWF}).   
The coefficients $a$, $b$, $u$ and $w$ defined above are given by \begin{equation*}
   \begin{array}{ll}
       a = \sigma_Q^{\frac12} (-1,\gamma)  &  b=(n,s)  \\
       u=\frac1{\gamma n+s} (\gamma,1) & w=\frac1{\sigma_Q^{\frac12}(\gamma n+s)} (-s,n).
   \end{array} \label{eq:diracfluid}
\end{equation*}
We verify that $w^Ta+u^Tb=I_2$, with $\gamma n+s\geq c_0>0$ bounded away from $0$ as a condition for the uniform linear independence of $a$ and $b$.   Note that $c_0>0$ follows from thermodynamic inequalities independently of our transport calculation.

 Let us consider the one dimensional setting on $X=(0,1)$ with periodic boundary conditions (to simplify).  From (\ref{eq:uppera}) we obtain
\[
   -p = \bar \fa c, \qquad \bar \fa = \aver{\nu (u')^T(u') + w^T w} = \left(\begin{matrix} \aver{\nu u_1'u_1'+w_1^2} & \aver{\nu u_1' u_2' + w_1w_2} \\  \aver{\nu u_1' u_2' + w_1w_2} &  \aver{\nu u_2'u_2'+w_2^2}  \end{matrix} \right).
\]
We may easily confirm that there is no general relationship between $\kappa$ and $\sigma$.  The experimentally measured thermal conductivity is measured in the absence of charge current ($J_1=0$); since 
\[\left(\begin{array}{cc} \sigma &\ \alpha \\ \tilde\alpha &\ \tilde\kappa \end{array}\right)=\bar\fa^{-1},\]
it is given by 
\begin{equation}
    \kappa = \tilde\kappa-\tilde\alpha\sigma^{-1}\alpha=\left(\bar\fa_{11} - (\bar\fa_{12})^2\bar\fa_{22}^{-1}\right)/\det{\bar\fa}, \label{eq:truekappa}
\end{equation}
while $\sigma =\bar\fa_{22}/\det{\bar\fa}$. The Lorenz ratio is then \begin{equation*}
    \mathcal{L} = \frac{\kappa}{\sigma}=\det (\bar\fa)/\bar\fa_{22}^2.
\end{equation*}

\subsection{Galilean symmetry}
  Another simple scenario arises when the hydrodynamic transport equations are Galilean invariant \cite{andreev}.  In this case, one replaces $a$ in (\ref{eq:abdiracfluid}) with \[
a = \kappa_Q^{1/2}(0,1)
\]
while leaving $b$ unchanged.  Note that in this case, the charge current $J$ is identically proportional to $v$. 

It may be the case at ultra low temperatures, with sufficiently oscillatory $b$, that it is acceptable to ignore $T$, and think of the case $m=1$.  Here $a=0$, $u=b^{-1}=n^{-1}$ and $J=nv$ so that $v=bJ$ and $-\nabla\mu=u L(uJ)$. We then obtain the variational formulation $\aver{\nabla\psi,\tilde J} = \fa(J,\tilde J)$ with $\fa(J,\tilde J)=\aver{S Ju,S \tilde Ju}$. This generates an inner product provided that $n=b$ is sufficiently oscillatory as in the above example. In the one-dimensional setting, we would then obtain an average diffusion coefficient given by $\bar\fa=\aver{\nu (u')^2}$, which shows that $\nu>0$ and $u$ oscillating is necessary to obtain an effective coefficient such that $-\aver{\mu'} = \bar\fa \aver{J}$. The above theory, as well as the homogenization theory in the next two sections, also apply to this case.



%
\section{Scaling and periodic homogenization}
\label{sec:per}
%


This section considers transport over large distances compared to the scale at which the coefficients $a$ and $b$ oscillate. When the fluctuations are sufficiently stationary, i.e.,  independent under `macroscopic' spatial translations, we expect the homogenized coefficients obtained in earlier sections to dictate large scale transport. This is the so-called homogenization regime, or effective-medium regime. There is a huge mathematical literature on the analysis of such problems in a variety of contexts. The simplest non-trivial setting to analyze the emergence of such macroscopic models assumes that the coefficients are spatially periodic of period $1$ (after appropriately rescaling $x$): $\theta(x+\tau)=\theta(x)$ for each $\tau\in\Zm^D$ and $x\in\Rm^D$.

We wish to consider transport over a large domain $X_N$, with volume of order $N^D$ for $N\gg1$, and define $\eps=\frac 1N$. For a solution $J$ on $X_N$, we introduce $J_\eps(x)=J(Nx)$ posed on a domain $X$ independent of $N$, and after rescaling, which simply amounts to replacing every differentiation $\nabla$ by its rescaled version $\eps\nabla$, find the $\eps-$dependent bilinear form:
\begin{equation}\label{eq:aeps}
  \fa_\eps(J,\tilde J) = \aver{J w_\eps^T,\tilde J w_\eps^T} + \eps^2 \aver{S_\eps Ju_\eps^T,S_\eps \tilde Ju_\eps^T},
\end{equation}
where all coefficients $a_\eps$, $b_\eps$, $w_\eps$, $u_\eps$, as well as $\eta_\eps$ and $\zeta_\eps$ defining $S_\eps$ are given by $\rho_\eps(x)=\rho(\frac x\eps)$. We assume that the coefficient are such that  $\fa_\eps$ is uniformly (in $\eps$) an inner product on $\mH$. What we mean by this is that
\[
  \fO_\eps = \min_{\theta\in \Sm^{D-1}} \max_{1\leq j\leq m} \aver{(\theta\cdot \eps\nabla b_{\eps j})^2}
\]
is bounded below by a positive constant independent of $\eps$. Here, $\aver{\cdot,\cdot}$ is the $L^2(X)$ inner product. Note that if $X$ is the unit cube, then by periodicity $\fO_\eps$ is independent of $\eps$ and given by \eqref{eq:osc}. With these hypotheses, we have
\begin{equation}\label{eq:bdaeps}
   \fa_\eps(J,J) \geq \dfrac{C}{\fO_\eps} \aver{|J|^2}
\end{equation}
independent of $\eps$. This result is obtained on the domain $X_N$ seen as $N^D$ copies of the unit cube. On each of the cubes composing $X_N$, we apply Prop. \ref{prop:stab} and \eqref{eq:l2bd}, with a constant independent of the cube by periodicity of the coefficients (or by assumption that the oscillation is bounded below on each cube). After rescaling on $X$, we find \eqref{eq:bdaeps}.

During the rescaling process, $S$ is replaced by $\eps S_\eps$, since $\nabla$ is replaced by $\eps\nabla$. Recalling that we write $J = \nabla\cdot f$ (where $x$-linear coefficients are allowed in $f$), after rescaling, the spatially varying component $f$ is replaced by $\eps^{-1}f$. This rescaling does not change any physical quantity as our problem is linear, and leaves $J$ invariant. 

Because our original problem involves different orders of differentiation, the rescaling imposes a combination of terms with different powers of $\eps$.  Note, however, that the term $\eps^2 \aver{S_\eps Ju_\eps^T,S_\eps \tilde Ju_\eps^T}$ is not small since the coefficients $u_\eps$ are differentiated and $\eps\nabla u_\eps(x) = (\eps\nabla)[u(\frac x\eps)] = (\nabla u)(\frac x\eps)$. First-order derivatives in $J$ or second-order derivatives in $f$ are indeed of order $\eps$ and so do not contribute to the homogenized limit. The above problem is therefore degenerate and bears similarities with the example treated in \cite[Chapter 14]{BLP-78}. In the homogenized limit, only linear profiles for $\psi$ and $f$ remain, which results in an effective second-order elliptic equation for $f$ even though higher-order derivatives are present in the $\eps-$dependent problem.

\medskip

As we mentioned earlier, the homogenization theory of many problems with periodic (or more general stationary) coefficients is very well understood. We refer the reader to \cite{BLP-78,JKO-SV-94} and their extensive bibliographies for relevant material in what follows. Among all questions one can ask in homogenization theory, we focus on one, namely the convergence of the heterogeneous solution to its homogenized solution on the domain $X$ when Dirichlet boundary conditions are prescribed. Here, convergence will be shown for the variables $f_\eps$ strongly in the $L^2$ sense and weakly in the $H^1$ sense. To obtain this result, we use Tartar's energy method. The derivation closest to ours is a model studied in  \cite[Chapter 14]{BLP-78}, in which the Tartar method is analyzed to solve a scalar degenerate homogenization problem.

In this problem, as in many other homogenization settings, we first need to obtain a homogenized coefficient, not unlike $\bar\fa$ in earlier sections, which is written as an appropriate integral of an appropriate {\em corrector}. For a periodic domain $(0,1)^D$ and $c$ a constant tensor in $\Rm^{D\times m}$, let the corrector $J=J(c)$ be the unique solution in $\mH_\sharp$ of
\begin{equation}\label{eq:corper1}
  \fa(c+J,\tilde J) =0 ,\quad \mbox{ for all } \tilde J\in \mH_\sharp.
\end{equation}
Then $\bar \fa$ is the unique homogenized (matrix-valued) coefficient characterized by
\begin{equation}\label{eq:coefper}
  (c,\bar \fa c) = \fa(c+J,c+J) .
\end{equation}
The coefficient $\bar\fa$ is the same as $\bar\fa_\sharp$ constructed in the preceding section.

For $\eps>0$, we consider for concreteness the problem posed on an arbitrary open bounded domain $X$ with periodic coefficients and with boundary conditions restricting $f$ to be a smooth function $f_0$ on $\partial X$.  Let $J_0=\nabla\cdot f_0$ and $J_\eps=\nabla\cdot f_\eps$ be the unique solution in $J_0+\mH_D$ of the heterogeneous problem
\begin{equation}\label{eq:epsDir}
  \fa_\eps (J_\eps , \tilde J) =0 \quad \mbox{ for all } \tilde J \in\mH_D.
\end{equation}
This problem admits a unique solution with $f_\eps=f_0$ on $\partial X$ as an application of the above Riesz representation theorem. Then, adapting  the standard Tartar energy method \cite[Chapter 14]{BLP-78}, we obtain the following result:
\begin{theorem}\label{thm:perhom}
 Let $f_\eps$ be constructed as above. Then $f_\eps$ converges strongly as $\eps\to0$ in the $L^2(X)$ topology to a limit $f$ with $J=\nabla\cdot f$ the unique solution in $J_0+ \mH^1_D$ of
\begin{equation}\label{eq:limhom}
 \fa_0(J , \tilde J) := \aver{ \bar\fa J ,\tilde J} = 0 \quad \mbox{ for all } \tilde J  \in \mH_D(X).
\end{equation}
This may be recast as the system of partial differential equations $-\nabla \cdot\left(\bar \fa \nabla\cdot f\right)=0$ in $X$ with $f=f_0$ on $\partial X$. 
\end{theorem}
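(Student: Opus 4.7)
The plan is to adapt Tartar's oscillating-test-function method, in the spirit of the degenerate scalar model treated in \cite[Chapter 14]{BLP-78}, to the mixed-order bilinear form $\fa_\eps$. The argument has three parts: uniform a priori bounds and weak compactness, identification of the homogenized equation via the periodic correctors of \eqref{eq:corper1}--\eqref{eq:coefper}, and uniqueness of the limit.

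\emph{A priori bounds.} The first step is to lift the boundary data $f_0$ to a smooth antisymmetric tensor $F_0$ on $X$ and test \eqref{eq:epsDir} against $\tilde J = J_\eps - \nabla\cdot F_0 \in \mH_D$. Since $\fO_\eps$ is bounded below uniformly in $\eps$, the coercivity estimate \eqref{eq:bdaeps} together with Cauchy--Schwarz yields $\aver{|J_\eps|^2} \leq C$ independent of $\eps$. A Hodge-type gauge fixing pins down $f_\eps$ uniquely from $J_\eps$ and $f_0$, and standard elliptic regularity upgrades the bound to $\|f_\eps\|_{H^1(X)} \leq C$. By Rellich compactness, a subsequence (still denoted $\eps$) satisfies $f_\eps \to f$ strongly in $L^2(X)$ while $J_\eps \rightharpoonup J := \nabla\cdot f$ weakly in $L^2(X)$.

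\emph{Identification.} I would then show that $J$ solves \eqref{eq:limhom}. For each constant $c \in \Rm^{D\times m}$, let $J^c = c + \nabla_y\cdot f^c$ with $f^c$ smooth periodic be the corrector of \eqref{eq:corper1}, and let $P^c$ be its conservative dual flux defined by $\fa(J^c,\tilde J) = \aver{P^c,\tilde J}$ for all $\tilde J \in \mH_\sharp$; by \eqref{eq:coefper} the mean value of $P^c$ over the unit cell equals $\bar\fa c$. Introduce the rescaled quantities
\[
J^c_\eps(x) := J^c(x/\eps), \qquad f^c_\eps(x) := \eps\, f^c(x/\eps),
\]
so that $J^c_\eps = c + \nabla\cdot f^c_\eps$ and $J^c_\eps \rightharpoonup c$ weakly in $L^2(X)$ by the usual mean-value argument. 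For any $\varphi \in C_c^\infty(X)$, set $\tilde J_\eps^{\varphi,c} := \nabla\cdot(\varphi f^c_\eps) \in \mH_D$, which equals $\varphi(J^c_\eps - c) + O(\eps)$ in $L^2$ by the uniform bound on $f^c$. Plugging into \eqref{eq:epsDir} and passing to the limit using two div-curl (compensated compactness) identities -- one for the zero-order piece $\aver{J_\eps w_\eps^T,\tilde J_\eps^{\varphi,c} w_\eps^T}$, the other for the mixed-order piece $\eps^2 \aver{S_\eps(J_\eps u_\eps^T),S_\eps(\tilde J_\eps^{\varphi,c} u_\eps^T)}$ -- delivers the identity $\aver{\varphi\, \bar\fa^T J, c} = 0$ for all such $\varphi$ and $c$, which is precisely the weak formulation \eqref{eq:limhom} tested against a dense subset of $\mH_D$.

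\emph{Uniqueness and main obstacle.} Positive definiteness of $\bar\fa$ from \eqref{eq:bda} and the Lax--Milgram theorem yield uniqueness of the limit $f$, so the whole sequence $f_\eps$ converges strongly in $L^2(X)$. The PDE recasting $-\nabla\cdot(\bar\fa\nabla\cdot f)=0$ with $f=f_0$ on $\partial X$ then follows by an integration by parts in \eqref{eq:limhom} against smooth compactly supported antisymmetric test tensors. The principal obstacle, absent from the classical scalar setting, is the degenerate mixed-order structure of $\fa_\eps$: the $\eps^2$ prefactor exactly cancels the $\eps^{-1}$ derivatives falling on $u_\eps(x)=u(x/\eps)$, so only the $J\otimes(\nabla u)(x/\eps)$ piece of $\eps S_\eps(Ju_\eps^T)$ survives at leading order, whereas genuine $\eps\nabla J$ contributions vanish. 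The div-curl lemma must therefore be applied asymmetrically to this term, and the $O(\eps)$ commutators produced when $\nabla$ acts on the cutoff $\varphi$ in $\tilde J_\eps^{\varphi,c}$ must be shown to vanish in $L^2$ using uniform control of $f^c$ and $\nabla f^c$ on the unit cell.
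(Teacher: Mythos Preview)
Your overall plan follows the paper's Tartar energy-method proof, and the a priori bounds and uniqueness parts are fine. The gap is in the identification step. You test only \eqref{eq:epsDir} against $\tilde J_\eps^{\varphi,c}=\nabla\cdot(\varphi f^c_\eps)$ and then invoke unspecified div-curl identities. But after discarding the $O(\eps)$ commutators, the $w$-piece is $\aver{J_\eps w_\eps^T,\varphi(J^c_\eps-c)w_\eps^T}$, a pairing of two sequences that converge only weakly in $L^2$; neither factor alone carries a usable div/curl structure (only the full flux $\xi^\eps=\xi^\eps_w+\xi^\eps_u$ is approximately a gradient, not its pieces), so the div-curl lemma does not apply termwise as you claim. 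Moreover, your asserted outcome $\aver{\varphi\,\bar\fa^T J,c}=0$ for all $\varphi\in C_c^\infty(X)$ and all $c\in\Rm^{D\times m}$ would force $\bar\fa^T J\equiv 0$ and hence $J\equiv 0$, which is false for nontrivial $f_0$.

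The missing ingredient is the second half of Tartar's trick: you must also test the rescaled corrector equation $\fa_\eps(\nabla\cdot\theta_\eps,\tilde J)=0$ against $\nabla\cdot(\phi f_\eps)$ and \emph{subtract}, obtaining the Green-type identity
\[
\fa_\eps\big(\nabla\cdot f_\eps,\nabla\cdot(\phi\theta_\eps)\big)-\fa_\eps\big(\nabla\cdot\theta_\eps,\nabla\cdot(\phi f_\eps)\big)=0,
\]
where $\theta_\eps=\eps\theta(\cdot/\eps)$ is the full harmonic coordinate (linear part $\fc$ plus your periodic $f^c_\eps$). The symmetry of $\fa_\eps$ then makes the problematic weak$\times$weak products cancel exactly; what survives are pairings of weakly convergent fluxes against \emph{strongly} $L^2$-convergent factors (either $\theta_\eps\to\fc$ or $f_\eps\to f$), which pass to the limit trivially and yield $\xi=\bar\fa\nabla\cdot f$. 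No compensated-compactness lemma is invoked. Your observation that only the $J\otimes(\nabla u)(\cdot/\eps)$ part of $\eps S_\eps(Ju_\eps^T)$ survives at leading order is correct and is exactly how the paper handles the degenerate $u$-block, but it must be combined with this symmetric two-sided testing---which is also where your unused dual flux $P^c$, with periodic mean $\bar\fa c$, actually enters---for the argument to close.
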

\begin{proof}
We apply the energy method of Tartar to degenerate systems of equations following the presentation in, e.g., \cite[Chapter 14]{BLP-78}, which treats a degenerate scalar equation.
Let $f_\eps$ be the unique solution obtained in \eqref{eq:epsDir} and equal to $f_0$ on $\partial X$.
By assumption, the coefficients are $1-$periodic and $\fO_\eps= \fO>0$ and thus, as an application of \eqref{eq:bdaeps} and the fact that $f$ is an anti-symmetric tensor, have that
\[
   \| \nabla f_\eps \|_{L^2} \leq C.
\]
Here, we use the bound obtained from $\fa_\eps$ as well as the fact that $\nabla\cdot f$ controls the $H^1$ norm of $f$ for anti-symmetric tensors in $\mH$.
By an application of the Poincar\'e inequality, this means that $f_\eps$ is bounded in the $H^1$ sense as well and we deduce the convergence of $f_\eps$ to $f$ weakly in $H^1$ and strongly in $L^2$ (for each component):
\[
  f_\eps \rightharpoonup f,\quad \mbox{ in  the $H^1$ sense},\qquad f_\eps\to f\quad \mbox{ in  the $L^2$ sense}.
\]
We also deduce from the equations and bounds on the coefficients that
\[
 \eps \| [\nabla \nabla\cdot f]u_\eps^T \|_{L^2} +   \eps \| \nabla [(\nabla\cdot f) u_\eps^T] \|_{L^2} \leq C.
\]
We write the equation for $J_\eps$ as
\[
  \aver{w_\eps J_\eps w_\eps^T + \eps^2 [S_\eps u_\eps^T]^T S_\eps [J_\eps u_\eps^T], \tilde J }
  + \aver{ \eps S_\eps [J_\eps u_\eps^T] ,  \eps S_\eps [\tilde J] u_\eps^T } =0.
\]
We use here square brackets to delimit which terms the first-order differentiations in $S_\eps$ act on. We recast the first term as $\aver{\xi^\eps,\tilde J}$ and with obvious notation as $\aver{\xi^\eps_w+\xi^\eps_u,\tilde J}$. Taking $\tilde J$ sufficiently smooth, the last contribution converges to $0$:
\[
  \aver{\xi^\eps,\tilde J} = o(1).
\]
The above bounds on $f_\eps$ show that $\xi^\eps$ (as well as $\xi^\eps_w$ and $\xi^\eps_u$) is bounded in the $L^2$ sense and hence converges weakly in that sense to $\xi=\xi_w+\xi_u$. Passing to the limit therefore gives
\begin{equation}\label{eq:conserv}
  \aver{\xi,\nabla\cdot \tilde f} =0,
\end{equation}
for all $\tilde f$ sufficiently smooth, and therefore by density for any $\tilde f\in\mH^1_D$, the closure of $\mH_D$ for the $H^1$ topology.

It therefore remains to show that $\xi$ equals $\bar \fa \nabla\cdot f$ for $f$ the limit of $f_\eps$.

Let us introduce the anti-symmetric linear tensor $\fc=\fc(x)$ such that $\nabla\cdot \fc(x)=c \in \Rm^{D\times m}$ for $x\in\Rm^D$ and for any prescribed tensor $c$.

We denote by $\theta=\theta(c)$ the {\em harmonic} coordinates such that $\nabla\cdot\theta\in c+\mH_\sharp$ given as the unique solution to
\begin{equation}\label{eq:corrector}
   \fa(\nabla\cdot\theta,\tilde J)=0,\quad\mbox{ for all } \tilde J \in \mH_\sharp.
\end{equation}
The solution $\theta-\fc(x)$ is referred to as the {\em corrector}, a periodic, and hence (locally in $L^2$) bounded, perturbation of the linear profile $\fc(x)$. Let us now define $\theta_\eps=\eps \theta(\frac\cdot\eps)$. Note that by linearity, $\eps\fc(\frac\cdot\eps)=\fc$. However, $\eps\theta(\frac\cdot\eps)-\fc(x)$ converges to $0$ in the $L^2$ sense (locally) since $\theta-\fc$ is periodic. Its spatial gradient is given by $(\nabla\theta)(\frac\cdot\eps)-c$, which is of order $O(1)$. It is this separation of scales that makes homogenization on a periodic domain reasonably tractable. We then observe that the rescaled corrector solves the equation
\[
 \fa_\eps(\nabla\cdot \theta_\eps,\tilde J)=0, \quad \mbox{ for all } \tilde J\in \mH_D.
\]

One last classical ingredient that makes calculations straightforward in the periodic setting is the fact that for any periodic locally square integrable function $\zeta(x)$, we find that $\zeta(\frac\cdot\eps)$ converges weakly in the $L^2$ sense to its average $\int_Q \zeta(y)dy$ as $\eps\to0$, with $Q=(0,1)^D$. This is often referred to as a (generalized) Riemann-Lebesgue lemma.

Let us finally introduce a smooth test function $\phi\in C^\infty_0(X)$ to localize constraints in $X$. We then consider the equation for $f_\eps$ with test function $\nabla\cdot(\phi \theta_\eps) \in \mH_D$ and the equation for $\theta_\eps$ with test function $\nabla\cdot(\phi f_\eps)\in \mH_D$. Therefore, as a sort of Green identity, we have
\begin{equation}
  \fa_\eps(\nabla\cdot f_\eps,\nabla\cdot (\phi \theta_\eps)) - \fa_\eps(\nabla\cdot \theta_\eps,\nabla\cdot (\phi f_\eps))L:= \delta\fa_\eps^w + \delta\fa_\eps^u=0. \label{eq:aepsilongreen}
\end{equation}
For later convenience, we defined $\delta\fa_\eps^w$ to denote the contributions of the $w$-terms in the above sum
and $\delta\fa_\eps^u$ that of the $u$-terms.  (\ref{eq:aepsilongreen}) holds for every $\phi\in C^\infty_0(X)$ and every tensor $c\in\Rm^{D\times m}$, which will be sufficient to characterize $\xi$, the limit of $\xi^\eps$. 

Let us first consider the contribution of the $w$-terms to (\ref{eq:aepsilongreen}):
\begin{align}
\delta\fa_\eps^w &= \aver{\nabla \cdot f_\eps w_\eps^T, \nabla \cdot (\phi \theta_\eps)  w_\eps^T} - \aver{\nabla \cdot \theta_\eps w_\eps^T, \nabla \cdot (\phi f_\eps)  w_\eps^T} \notag \\
  &= \aver{\nabla \cdot f_\eps w_\eps^T, (\nabla\phi)\cdot \theta_\eps  w_\eps^T} - \aver{\nabla \cdot \theta_\eps w_\eps^T, (\nabla \phi) \cdot f_\eps  w_\eps^T}. \label{eq:thm41wconv}
\end{align}
Since $\theta_\eps$ converges strongly to $\fc$ locally and $\xi^\eps_w$ converges weakly to $\xi_w$, we have
\begin{align*}
 \aver{\nabla \cdot f_\eps w_\eps^T, (\nabla\phi)\cdot \theta_\eps  w_\eps^T} &\rightarrow  \aver{\xi_w,(\nabla\phi) \fc}= -\aver{\xi_w,c\phi}-\aver{\nabla\cdot \xi_w,\phi \fc}.
\end{align*}
The contribution from the second term of (\ref{eq:thm41wconv})  is  of the form $f_\eps$, which converges strongly to $f$, times a periodic object, which converges weakly to its limit. Denote the spatial average $ \aver{w\nabla\cdot \theta w^T} :=\bar\fa_w c$. We obtain 
\[
\aver{\nabla \cdot \theta_\eps w_\eps^T, (\nabla \phi) \cdot f_\eps  w_\eps^T} \rightarrow \aver{\bar\fa_w,\nabla \phi \cdot  f c}=-\aver{\bar\fa_w \nabla \cdot f, c\phi}
\]
where in the last step we integrated by parts (recall $\bar\fa_w$ is constant).  The limit of the first contribution is therefore overall
\begin{equation}
\delta\fa_\eps^w =   -\aver{\xi_w,c\phi}-\aver{\nabla\cdot \xi_w,\phi \fc} + \aver{\bar\fa_w \nabla \cdot f, c\phi}. \label{eq:thm41wfinal}
\end{equation}

Next we evaluate $\delta\fa_\eps^u$.  The only difference is the presence of higher-order derivatives in the operator $S_\eps$, which cause the problem to be degenerate:
\begin{eqnarray}
 \delta\fa_\eps^u&=& \aver{ \eps S_\eps [\nabla \cdot f_\eps u_\eps^T], \eps S_\eps [\nabla \cdot (\phi \theta_\eps)  u_\eps^T]} - \aver{\eps S_\eps [\nabla \cdot \theta_\eps u_\eps^T], \eps S_\eps [\nabla \cdot (\phi f_\eps)  u_\eps^T]}
\label{eq:thm41uconv}\\
 &=& \aver{ \eps S_\eps [\nabla \cdot f_\eps u_\eps^T], \eps S_\eps [(\nabla \phi) \cdot  \theta_\eps  u_\eps^T]} - \aver{\eps S_\eps [\nabla \cdot \theta_\eps u_\eps^T], \eps S_\eps [(\nabla \phi) \cdot  f_\eps  u_\eps^T]} + o(1), \nonumber
\end{eqnarray}
since $\eps S_\eps \phi = O(\eps)$ is lower order. Similarly, $\eps S_\eps \nabla\phi =O(\eps)$  and $\eps S_\eps \theta_\eps =O(\eps)$.  Therefore, the first term of (\ref{eq:thm41uconv}) converges to: 
\begin{align*}
\aver{ \eps S_\eps [\nabla \cdot f_\eps u_\eps^T], \eps S_\eps [(\nabla \phi) \cdot  \theta_\eps  u_\eps^T]} &= \aver{ (\eps S_\eps u_\eps^T)^T \eps S_\eps [\nabla \cdot f_\eps u_\eps^T],  (\nabla  \phi)\cdot \theta_\eps) } +o(1)\\
 &=: \aver{\xi_u^\eps,\nabla \phi \cdot \theta_\eps} +o(1)\rightarrow -\aver{\xi_u,c\phi}-\aver{\nabla\cdot \xi_u,\phi \fc}.
\end{align*}
since, as previously, $\nabla\phi \cdot \theta_\eps=\nabla\phi \cdot \fc+O(\eps)$.
Next we bound the second term of (\ref{eq:thm41uconv}).  Since $\eps S_\eps f_\eps = O(\eps)$ in the $L^2$ sense, and $\eps S_\eps \phi = O(\eps)$ as before, 
\[
\aver{\eps S_\eps [\nabla \cdot \theta_\eps u_\eps^T], \eps S_\eps [(\nabla \phi) \cdot  f_\eps  u_\eps^T]} = \aver{(\eps S_\eps u_\eps^T)^T \eps S_\eps [\nabla\cdot \theta_\eps u_\eps^T] ,\nabla \phi \cdot f_\eps} + o(1).
\]
 Again, this is the product of $f_\eps$, which converges strongly to $f$, and a periodic object, which converges weakly to a constant tensor $\aver{(\eps S_\eps u_\eps^T)^T \eps S_\eps [\nabla\cdot \theta_\eps u_\eps^T]} := \bar\fa_u c$.  Hence,
 \[\aver{\eps S_\eps [\nabla \cdot \theta_\eps u_\eps^T], \eps S_\eps [(\nabla \phi) \cdot  f_\eps  u_\eps^T]} \rightarrow \aver{\bar\fa_u, \nabla\phi \cdot fc } =-\aver{\bar\fa_w \nabla \cdot f, c\phi} .
 \]
 Therefore \begin{equation}
     \delta\fa_\eps^u =   -\aver{\xi_u,c\phi}-\aver{\nabla\cdot \xi_u,\phi \fc} + \aver{\bar\fa_u \nabla \cdot f, c\phi}. \label{eq:thm41ufinal}
 \end{equation}

Combining (\ref{eq:aepsilongreen}), (\ref{eq:thm41wfinal}) and (\ref{eq:thm41ufinal}), and defining $\bar\fa=\bar\fa_u+\bar\fa_w$: 
\[
   -\aver{\xi,c\phi} + \aver{\xi , \nabla \cdot (\phi \fc)} + \aver{\bar\fa \nabla\cdot f,c\phi} =0.
\]
 Since the middle term vanishes thanks to \eqref{eq:conserv} and the above holds for each tensor $c$ and test function $\phi$, we deduce that $\xi=\bar\fa \nabla\cdot f$, which concludes our proof.
\end{proof}

In spite of the structure of the original problem, with more derivatives applied to $Ju^T$ than $Jw^T$, in the high frequency limit, the problem is best described as a degenerate homogenization problem with a standard second-order elliptic (non-degenerate) limit.

Moreover, we can come back to get an effective equation for the solution $\psi_\eps$ given by \eqref{eq:psi} and observe in the above proof that $\nabla\psi_\eps$ is given by $\xi^\eps$ up to negligible contributions. Therefore, it is given in the limit by $\xi=\bar\fa\nabla\cdot f$, or in other words, $\nabla\cdot f =\bar\fa^{-1}\nabla\psi$, so that taking divergence again (which in our notation vanishes as an application of an exterior derivative twice), we get
\[
  -\nabla\cdot \bar\fa^{-1}\nabla \psi =0
\]
on $X$. The heterogeneous problem for $(\psi_\eps,v_\eps)$ involves the necessary coupling with a mitigating velocity field $v$ that mixes all components of $\psi$. While the problem is clearly degenerate for $\psi$ alone as the coefficient $a$ is of rank $m-1$, we obtain in the limit $\eps\to0$ an effective equation solely involving the variables $\psi$.

The expressions $v_\eps$ and $\nabla \psi_\eps$ are oscillating, for instance $v_\eps = J_\eps u_\eps^T$ with $J_\eps=\nabla\cdot f_\eps$ an object of order $O(1)$ in the periodic setting converging weakly to its limit. Many similar results proved in other periodic homogenization contexts, such as convergence of gradients and order $O(\eps)$ rates of convergence away from boundaries, are certainly valid here. We refer the reader to the corresponding literature, e.g., \cite{BLP-78,JKO-SV-94}, for details.

Let us briefly make a final comment on the comparison of homogenized coefficients obtained with periodic and Dirichlet boundary conditions in the homogenization regime. We saw in Proposition \ref{prop:comp} that $\bar\fa_D\geq \bar \fa_\sharp$. Let $X$ be the unit cube $(0,1)^D$ with $\eps$-oscillatory coefficients. The periodic coefficient $\bar\fa_\sharp$ is given in \eqref{eq:coefper} with $f$ the solution of \eqref{eq:corper1}. For $\eps=\frac1N$, and since all coefficients are periodic, we find that 
\[
  (c,\bar\fa_\sharp c) = \fa_\eps\left (c+\eps\nabla\cdot f\left(\frac x\eps\right),c+\eps\nabla\cdot f\left(\frac x\eps\right)\right).
\]
Let us define $f_\eps=\eps f(\frac\cdot\eps)$ and introduce $0\leq \theta_\eps(x)\leq 1$ a smooth function equal to $1$ $\eps-$away from $\partial X$ and equal to $0$ in the vicinity of $\partial X$. Then $\nabla\cdot(\theta_\eps f_\eps)\in \mH_D$ and $\nabla \cdot (\theta_\eps f_\eps) = \theta_\eps\nabla\cdot f_\eps + \nabla \theta_\eps \eps f(\frac\cdot \eps)$. The latter is controlled in the $\eps-$vicinity of $\partial X$. Therefore, going through similar details to those in the above proof, we verify that 
\[
  (c,\bar\fa_\sharp c)  \geq  \fa_\eps\big (c+\nabla\cdot \theta_\eps f_\eps,c+\nabla\cdot\theta_\eps f_\eps\big) + o(1).
\]
But the latter is smaller than the coefficient obtained with Dirichlet boundary conditions, which we will call $\bar\fa_{ND}$.

Let now $\bar \fa_{N\sharp}$ be the homogenized coefficient obtained with periodic boundary conditions on $[0,N]^D$. This is obtained as the solution to the problem $\fa_N(c+\nabla\cdot f, \tilde J)=0$ for all $\tilde J\in\mH_\sharp$. The solution $f_1$ obtained by periodicity on the cube $(0,1)^D$ and extended by periodicity solves the partial differential equation
\[
    c+J_1 w^Tw + L (J_1u^T)u=0
\]
with $J_1=\nabla\cdot f_1$, both on the unit cube as well as on the large cube $[0,N]^D$; for this, we need the solution to be sufficiently smooth, which we assume. By uniqueness of the solution, we deduce that 
\[
  (c,\bar \fa_{N\sharp}c)  =  N^{-D} \fa_N(c+J_1,J_1) = (c,\bar \fa_{\sharp}c).
\]
This shows that $\fa_\sharp=\fa_{N\sharp}\leq \fa_{D\sharp} = \fa_\sharp+o(1)$ and hence the equality of all coefficients in the limit $\eps\to0$. Note that the regularity of the solution is essential to obtain that $\fa_\sharp=\fa_{N\sharp}$. For functionals that are not convex and hence with less clear uniqueness properties, the result may simply be false \cite{muller1987homogenization}.

\section{Remarks on homogenization in stationary media}
\label{sec:rand}

We now briefly consider the extension of homogenization results in the setting of stationary ergodic underlying coefficients. A number of methods and results have been generalized from the periodic to the random settings. The energy method of Tartar, which we used in the preceding section, was generalized to the random setting in \cite{Koz-MSb-79,PV-RF-81}; see also the monograph \cite{JKO-SV-94}. A general method of stochastic two-scale convergence was proposed in \cite{BMW-JRAM-94}. In the meantime, a method based on the minimization framework we considered so far in this paper was developed in \cite{dal1986nonlinear,dal1986nonlinear1}; see also the monograph \cite{dal2012introduction}. 

The method proposed in the latter references is based on the notion of $\Gamma-$convergence \cite{dal2012introduction} of functionals and the property that under some general settings, such functionals $\Gamma-$converge as soon as their minimal values converge. The $\Gamma$-convergence of the functionals in turn implies that of minimizing sequences and yields results similar to the ones obtained in the preceding section by Tartar's energy method. That the minimal values converge, in the translation invariant ergodic setting, is a consequence of a general sub-additive property first used in the homogenization context in \cite{dal1986nonlinear}. It is this subadditive property that we wish to focus on here. We show below that effective coefficients obtained from variational problems with Dirichlet conditions do satisfy the appropriate sub-additivity properties. We do not consider the extension of $\Gamma-$convergence results to our degenerate setting, and instead refer the reader to \cite{dal2012introduction} in the setting of functionals that depend on $\nabla\cdot f$ only; not on terms of the form $S(\nabla \cdot f)u^T$.

The above minimization method has also proved fruitful when one is interested in corrections beyond the homogenization limit. While the latter implies that a solution $f_\eps$ converges to a limit $f_0$ in some (strong $L^2$, say) sense, the corrections aim to understand the size of the error $f_\eps-f_0$ and, if possible, to characterize the limiting law of this appropriately rescaled error term. This is a quite difficult problem, with recent progress obtained using a minimization approach \cite{armstrong2017quantitative}; see also \cite{GO-AAP-12,GM-MMS-16} for a non-degenerate homogenization problem with short-range random coefficients. The structure of the random fluctuations is much more complex than that in the periodic setting, where expansions show an error term of order $O(\eps)$, at least away from boundaries where boundary layer terms need to be included \cite{BLP-78,JKO-SV-94}. Random fluctuations in general stationary ergodic settings can be as large (in powers of $\eps$) as one wishes, as shown in some concrete examples in, e.g., \cite{B-CLH-08,BGGJ-AA-12,BGMP-AA-08}.

Although several of the aforementioned results most likely apply to our setting of Dirac fluids, we do not consider them here and focus on establishing the sub-additivity property that is a necessary step toward a $\Gamma-$convergence homogenization result. It also shows how coefficients, for instance computed numerically, are expected to behave as the domain size tends to infinity and the realization of the random medium varies. 

\medskip

Let us briefly recall a standard construction of stationary coefficients \cite{armstrong2017quantitative}.  We assume a probability space $(\Omega,\mF,\Pm)$ 
large enough to admit a group of discrete $D-$dimensional shifts $\tau_{n}$ with $n\in\Zm^D$ that are measure-preserving, i.e., $\Pm=\Pm\circ \tau_n^{-1}$ for each $n\in\Zm^D$, and with the abelian group structure $\tau_{n+m}=\tau_n\tau_m$. Intuitively, $\omega\in\Omega$ describes a particular realization of random functions $a$, $b$, etc., defined on $\mathbb{R}^D$; $\tau_n a = a(x+n)$ denotes a discrete translation. More precisely, let
 $z(x,\omega)$ be a coefficient that is for each $x\in\Rm^D$ a (real-valued) random variable defined on $(\Omega,\mF,\Pm)$ and for each $\omega$ a (Borel) measurable function that is continuous, uniformly bounded in $x$ and for some coefficients (e.g., $a,b$ and hence $u$) continuously differentiable. The coefficient $z$ is stationary when its joint distribution for any finite number of values of $x_j$ is identical to the joint distribution with $x_j$ replaced by $\tau_{-n}x_j$ for any $n\in\Zm^D$ . We assume all the coefficients $(\eta,\zeta,a,b)$, and hence $(u,w)$, to be stationary. We refer to the above mentioned literature for many examples of construction of spaces on which random coefficients are naturally stationary. Once the coefficients $z(x,\omega)$ are defined in $\Omega\times\Rm^D$, we define as in the preceding section the rescaled coefficients $z_\eps(x,\omega)=z(\frac x\eps,\omega)$.

The final notion of importance here is that of ergodicity, which means that for each $A\subset\mF$ that is invariant under $\tau$ (in the sense that $\tau_n^{-1}A=A$ for each $n\in\Zm^d$), then $\Pm(A)=0$ or $\Pm(A)=1$.  Many constructions in the above mentioned references are proved to be ergodic as well as stationary. Ergodicity precludes the existence of subsets $A\in\mF$ in $\Omega$ of intermediate measure between $0$ and $1$ such that $A$ and its complementary $A^c=\Omega-A$ are both invariant for the transformations $\tau$, whereby separating averaging into two non-communicating parts of the state space and therefore leading to two potentially different homogenization limits. 

\medskip

Armed with these constructions, we consider on $X=(0,1)^D$ the unit torus and for each realization $\omega\in\Omega$ the sequence of problems:
\[
  \mbox{ Find } J_\eps\in\mH_D\ \mbox{ such that } \ \fa_\eps (c+ J_\eps, \tilde J) =0 \ \mbox{ for all } \tilde J\in \mH_D.
\]
Since $\fa_\eps$ is a bilinear symmetric form, the above problem may be recast as the unique solution to the minimization of
\[
   \min_{J\in\mH_D} \frac12 \fa_\eps(J,J) - \fa_\eps(c,J).
\]
The homogenized coefficient at a given scale $\eps$ is given by
\[
    (c, \bar \fa_\eps c)  = \fa_\eps(c+J_\eps , c+J_\eps)
\]
for $J_\eps$ the above solution. Combining the two expressions as in Lemma \ref{lem:minim}, we observe that
\[
   (c, \bar \fa_\eps c) = \min_{J-c \in \mH_D}  \fa_\eps(J,J).
\]
In the periodic setting, we saw that $\bar \fa_\eps$ was equal to the homogenized coefficient $\bar\fa$ by construction. In the random setting, where the coefficient are no longer periodic, the above quantity depends on $\eps$ and on the realization of the random medium. However, as first exploited in \cite{dal1986nonlinear} in the homogenization context, the above problem with Dirichlet boundary conditions admits very favorable subadditivity properties. Such properties are not directly shared by a problem written with periodic boundary conditions. This was the main motivation for us to develop the setting with Dirichlet boundary conditions.

The random coefficients constructed on $(\Omega,\mF,\Pm)$ are assumed to be invariant by discrete translations, as described above. In addition, we assume that $\Pm-$almost surely, the coefficients satisfy the local stability condition \eqref{eq:osc} with $\fO>0$ uniformly on each cube of the domain before rescaling. As in the periodic setting, this allows one to control $\aver{|J|^2}$ by $a_\eps(J)$ almost surely.  
The construction of such coefficients $b(x,\omega)$ may be based on those given on checkerboards. Instead of constructing a constant coefficient that is piecewise constant on the piece $(0,1)^D$, we construct a smooth coefficient with support $(-1,2)^D$ that is sufficiently oscillatory in all directions. Coefficients $b$ and $a$ so constructed then satisfy the bound \eqref{eq:osc}.


\begin{theorem}
  Let us assume that the coefficients defined on $(\Omega,\mF,\Pm)$ are stationary and that $\fa_\eps$ is uniformly an inner product on $\mH_D$. Then $\Pm-$ almost surely,
\begin{equation}\label{eq:stochlimit}
  \lim_{\eps\to0}  \min_{J-c \in \mH_D}  \fa_\eps(J,J) =  (c, \bar \fa(\omega) c ),
\end{equation}
for a tensor $\bar\fa(\omega)$. Moreover, if the translations on $(\Omega,\mF,\Pm)$ are ergodic, then the limiting tensor $\bar \fa$ is independent of $\omega$ almost surely.
\end{theorem}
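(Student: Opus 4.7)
The plan is to recast the rescaled minimum on $X=(0,1)^D$ as a multi-parameter sub-additive ergodic process and apply the Akc\"oglu-Krengel theorem, following the strategy introduced in the homogenization context by \cite{dal1986nonlinear}. For any open cube $Y\subset\Rm^D$ and constant tensor $c\in\Rm^{D\times m}$, define the unscaled energy
\[
  M(Y,c,\omega) := \inf_{J\in c+\mH_D(Y)}\ \dint_Y \big(|J w^T|^2 + |S(J u^T)|^2\big)\, dy,
\]
with the random coefficients $w,u,\eta,\zeta$ evaluated at $y\in Y$. A change of variable $y=x/\eps$ (as in the derivation of $\fa_\eps$ in Section \ref{sec:per}) shows
\[
  \min_{J-c\in\mH_D(X)}\fa_\eps(J,J) \;=\; \frac{M(Q_N,c,\omega)}{|Q_N|}, \qquad N=1/\eps,\ Q_N=(0,N)^D,
\]
so \eqref{eq:stochlimit} reduces to the $\Pm$-almost sure convergence of $M(Q_N,c,\omega)/|Q_N|$ as $N\to\infty$.

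I would then verify the three hypotheses required by the sub-additive ergodic theorem. Stationarity, $M(Y+n,c,\omega)=M(Y,c,\tau_n\omega)$ for $n\in\Zm^D$, follows from the group action on $\Omega$ combined with the translation invariance of $\mH_D$. The upper bound $M(Y,c,\omega)\le C|c|^2|Y|$ is obtained by testing with $J\equiv c$ (i.e.\ $f\equiv 0\in\mH_D$), and Proposition \ref{prop:stab} provides the matching lower bound $M(Y,c,\omega)\ge C^{-1}|c|^2|Y|$ under the standing assumption that the oscillation hypothesis holds uniformly on each unit cube $\Pm$-a.s. These bounds immediately give $\E\,M(Q_1,c,\cdot)<\infty$, which is all Akc\"oglu-Krengel requires.

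The core step is sub-additivity: for any finite partition $Y=\sqcup_k Y_k$ into disjoint open cubes (up to measure zero),
\[
   M(Y,c,\omega) \le \dsum_k M(Y_k,c,\omega).
\]
I would prove this by a concatenation argument. Pick near-minimizers $J^{(k)}=c+\nabla\!\cdot\! f^{(k)}$ in $c+\mH_D(Y_k)$, approximable in the $\fa$-norm by smooth elements of $\mD_D(Y_k)$, and define $f$ on $Y$ by setting $f\equiv f^{(k)}$ on $Y_k$. Since each $f^{(k)}$ has zero trace on $\partial Y_k$, the glued antisymmetric tensor lies in $H^1_0(Y)$, and $J:=c+\nabla\!\cdot\! f$ restricted to $Y_k$ coincides exactly with $J^{(k)}$, with no interface distributional contribution to $\nabla\!\cdot\! f$. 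Crucially, the defining boundary condition $J^{(k)}u^T=0$ on $\partial Y_k$ forces the glued $Ju^T$ to vanish on each internal interface from both sides, so $Ju^T\in H^1_0(Y)$ and the seminorm $|S(Ju^T)|^2$ is additive over the partition. Admissibility $J\in c+\mH_D(Y)$ and the claimed inequality follow.

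With these three hypotheses in hand, the Akc\"oglu-Krengel theorem for $\Zm^D$-indexed sub-additive processes yields an almost sure limit $\lim_{N\to\infty}M(Q_N,c,\omega)/|Q_N|=\bar m(c,\omega)$, measurable with respect to the $\tau$-invariant $\sigma$-algebra and equal to $\inf_{N}\E[M(Q_N,c,\cdot)]/|Q_N|$. Polarization in $c$ (or equivalently applying the same argument to $c\pm c'$) promotes $\bar m(\cdot,\omega)$ to a symmetric tensor $\bar\fa(\omega)\in\Rm^{D\times m\times m\times D}$, positive definite by the lower bound in Proposition \ref{prop:stab}, with $\bar m(c,\omega)=(c,\bar\fa(\omega)c)$. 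Under ergodicity the invariant $\sigma$-algebra is $\Pm$-trivial, so $\bar\fa(\omega)$ is almost-surely deterministic, completing the proof. The main obstacle is the sub-additivity step: because $\fa$ involves the higher-order seminorm $|S(Ju^T)|^2$ and the definition of $\mH_D$ imposes the pointwise constraint $Ju^T=0$ in addition to $f|_{\partial Y_k}=0$, concatenating admissible competitors requires careful bookkeeping to avoid interface singularities when $Ju^T$ is differentiated. This is precisely why the Dirichlet formulation, rather than the periodic one, is the natural framework for stochastic homogenization of the degenerate hydrodynamic system.
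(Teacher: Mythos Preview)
Your proposal is correct and follows essentially the same strategy as the paper: rescale to the unscaled domain, verify sub-additivity of the Dirichlet energy by concatenating minimizers (using that both $f$ and $(\nabla\cdot f)u^T$ vanish on the interior interfaces, exactly as the paper notes), and then invoke the Akc\"oglu--Krengel sub-additive ergodic theorem via \cite{dal1986nonlinear}. Your write-up is in fact more explicit than the paper's on the integrability bounds and on the polarization step recovering the tensor $\bar\fa(\omega)$, but the core idea is identical.
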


\begin{proof}
 This is a direct application of the subadditive ergodic theory \cite{dal1986nonlinear}. As we already noted, the left-hand side, for each $\eps$, is a quadratic form in $c$. We now prove that the quantity is subadditive. To do so, we define $N=\eps^{-1}$ and consider the domain $X_N=(0,N)^D$. Let $\fa[X]$ be the variational form (written with $\eps=1$) written. Then
\[
 \min_{J-c \in \mH_D} \frac12 \fa_\eps(J,J) = \min_{J-c \in \mH_D[X_N]}\frac{1}{2N^D} \fa[X_N](J,J) ,
\]
where $\mH_D[X]$ is the Hilbert space constructed on $X$. Let us define
\[
   \nu[X] =  \min_{J-c \in \mH_D[X]}\frac{1}{2} \fa[X](J,J).
\]

Let $X$ be a bounded open subset in $\Rm^D$, a union of disjoint open subsets $X_j$ in the sense that $\lambda(X - \bigcup_j X_j)=0$, for $\lambda$ the $D-$dimensional Lebesgue measure. We then verify that for any function $\nabla\cdot f_j\in \mH_D[X_j]$, hence with $f_j$ vanishing on $\partial X_j$ as well as $(\nabla\cdot f)u^T$, then the function $f$ equal to $f_j$ on each $X_j$ and equal to $0$ on $X - \cup_j X_j$ is such that $J=\nabla\cdot f$ belongs to $\mH_D[X]$.  This shows that
\[
 \nu[X] \leq \dsum_j \nu[X_j],
\]
since the minimization over $X$ involves a larger class of functions than those obtained from each subset. We then apply the sub-additive ergodic theorem as in \cite[Proposition 1]{dal1986nonlinear} following \cite{akcoglu1981ergodic} to obtain that for $X$ an open subset,
\[
  \dfrac{1}{|tX|} \nu[tX](\omega) \to \nu(\omega)
\]
as $t\to\infty$ for $\omega\in\Omega'$ a set of full measure. Moreover, when the coefficients are ergodic, then the above quantity $\nu$ is independent of $\omega$ a.s. This shows \eqref{eq:stochlimit}.
\end{proof}

Note that the above sub-additive property does not (necessarily) hold for periodic boundary conditions. Indeed, the juxtaposition of periodic solutions may jump across interfaces and therefore does not necessarily form a valid test function on the larger domain. It is quite likely that the calculation of homogenized coefficients may also be performed using periodic boundary conditions as in the periodic setting. In the limit $\eps\to0$, we expect all procedures to converge to the same value as in the periodic setting.  We do not consider such results further and refer to the appropriate literature \cite{dal2012introduction,muller1987homogenization} for pointers to the required mathematical tools.

\addcontentsline{toc}{section}{References}


%
%
%
%
%

%
%
%
\end{document}